\documentclass[a4paper,reqno]{amsart}

\usepackage{amsmath}
\usepackage{amsthm}
\usepackage{amssymb}
\usepackage{eucal}
\usepackage{tikz-cd}
\usepackage[colorlinks]{hyperref}

\numberwithin{equation}{section}
\usetikzlibrary{cd,arrows}
\tikzset{vertex/.style={shape=circle,draw,minimum size=3pt,inner sep=0pt}}
\tikzset{black-vertex/.style={shape=circle,draw,inner sep=3pt}}
\tikzset{red-vertex/.style={shape=circle,draw=red,dashed,inner sep=3pt}}
\tikzset{edge/.style={->,>=latex'}}

\theoremstyle{plain}

\newtheorem{thm}{Theorem}[section]
\newtheorem{prop}[thm]{Proposition}
\newtheorem{cor}[thm]{Corollary}
\newtheorem{lem}[thm]{Lemma}

\theoremstyle{definition}
\newtheorem{defn}{Definition}[section]

\newtheorem{exam}[defn]{Example}
\theoremstyle{remark}
\newtheorem{rmk}{Remark}[section]

\DeclareMathOperator{\Hom}{Hom}
\DeclareMathOperator{\Ext}{Ext}

\DeclareMathOperator{\End}{End}

\DeclareMathOperator{\oHom}{\overline{Hom}}

\DeclareMathOperator{\Ker}{Ker}
\DeclareMathOperator{\Coker}{Coker}

\renewcommand{\top}{\operatorname{top}}
\DeclareMathOperator{\rad}{rad}
\DeclareMathOperator{\soc}{soc}
\renewcommand{\mod}{\operatorname{mod}}
\DeclareMathOperator{\grade}{grade}
\DeclareMathOperator{\depth}{depth}
\DeclareMathOperator{\pdim}{pdim}
\DeclareMathOperator{\idim}{idim}
\DeclareMathOperator{\fdim}{fdim}
\DeclareMathOperator{\gldim}{gldim}
\DeclareMathOperator{\op}{op}
\DeclareMathOperator{\Tr}{Tr}
\DeclareMathOperator{\add}{add}
\DeclareMathOperator{\occ}{occ}

\newcommand{\lto}{\longrightarrow}

\newcommand{\fkp}{\mathfrak{p}}

\newcommand{\fkm}{\mathfrak{m}}

\newcommand{\scB}{\mathcal{B}}

\newcommand{\scE}{\mathcal{E}}

\newcommand{\scJ}{\mathcal{J}}

\newcommand{\scO}{\mathcal{O}}
\newcommand{\scP}{\mathcal{P}}

\newcommand{\scS}{\mathcal{S}}

\begin{document}

\title[A Serre-type criterion for n-Gorenstein rings]
{A Serre-type criterion for $n$-Gorenstein rings 
and its application to Nakayama algebras}

\author{Dawei Shen}
\address{School of Mathematics and Statistics \\
         Henan University \\
         Kaifeng, Henan 475004 \\
         P. R. China}
\email{sdw12345@mail.ustc.edu.cn}
\subjclass[2020]{Primary 16E10; Secondary 16G20}
\keywords{Auslander-Gorenstein ring, $n$-Gorenstein ring, Nakayama algebra}
\date{\today}
\begin{abstract}
Let $R$ be a two-sided Noetherian ring.
We introduce a Serre-type condition $(G_n)$, 
formulated in terms of the occurrence degree and 
the flat dimension of indecomposable injective $R$-modules,
and prove that $R$ is $n$-Gorenstein if and only if it satisfies $(G_n)$.
We then apply this criterion to Nakayama algebras via syzygy filtration.
It turns out that syzygy filtration preserves the Auslander-Gorenstein property 
and reflects it within the class of
$2$-Gorenstein Nakayama algebras.
As an application, we obtain a module-theoretic strengthening 
of the dimension bound conjecture of Klász, Kleinau and Marczinzik.
\end{abstract}

\maketitle

\section{Introduction}
Auslander-Gorenstein rings are noncommutative analogs of 
commutative Gorenstein rings \cite{FGR75}.
Let $R$ be a two-sided Noetherian ring
and let
\[
  0\lto R \lto  I^0(R)\lto I^1(R) \lto \cdots
\]
be a minimal injective coresolution of $R$. 
The ring $R$ is called $n$-Gorenstein if
the flat dimension of  $I^g(R)$ is at most $g$ for every $0\leq g<n$, 
and it is called Auslander-Gorenstein 
if these inequalities hold in all degrees and 
the two-sided selfinjective dimensions are finite.
An Auslander-Gorenstein ring $R$ is said to be Auslander regular 
provided that the global dimension $R$ is finite.
Auslander regular rings generalize the class of  commutative regular rings.

Well-known families of Auslander-Gorenstein rings are Weyl algebras \cite{Vo00}, 
enveloping algebras of finite dimensional Lie algebras \cite{Vo00},
higher Auslander algebras \cite{Iya07}, 
and blocks of the BGG category $\scO$ \cite{KMM21}.

After establishing a criterion for two-sided Noetherian rings, 
we focus on its applications to Artin algebras, 
particularly Nakayama algebras.
Determining whether an Artin algebra is Auslander-Gorenstein appears 
to be a challenging problem.
Several recent developments address the problem of deciding 
when an Artin algebra is Auslander-Gorenstein.
For instance, it is proved in \cite{IM22} that the incidence algebra $kL$ 
of a finite lattice $L$ 
over a field $k$ is Auslander-Gorenstein if and only if $L$ is distributive.
In \cite{Kla25} it is shown that a gentle algebra $kQ/I$
is Auslander-Gorenstein  precisely when, 
for each vertex $a$ of  $Q$, 
the in-degree of $a$ is two  if and only if the out-degree of $a$ is also two.
Furthermore, the problem of classifying all Auslander-Gorenstein monomial algebras 
is reduced to that of classifying all Auslander-Gorenstein Nakayama algebras.

An important feature of Auslander-Gorenstein algebras is the
Auslander-Reiten bijection.
In \cite{AR94}, Auslander and Reiten showed that an Auslander-Gorenstein
algebra admits a natural bijection between the isomorphism
classes of indecomposable injective modules and the isomorphism
classes of indecomposable projective modules.
This bijection is now known as the Auslander-Reiten bijection.

Recent work has revealed several further interpretations of this
bijection. Kl\'asz, Marczinzik and Thomas proved that the
Auslander-Reiten bijection agrees, at the level of simple modules, with
Iyama's grade bijection \cite{KMT25}. In the Auslander
regular case, they further showed that the corresponding permutation is
the permutation arising from the Bruhat decomposition of the Coxeter
matrix. Combined with the description of Auslander regular incidence
algebras obtained by Iyama and Marczinzik
\cite{IM22}, this identifies the
Auslander-Reiten permutation of the incidence algebra of a
distributive lattice with the rowmotion permutation of the lattice.
Kl\'asz proved that a monomial algebra is
Auslander-Gorenstein if and only if its Auslander-Reiten map is
well-defined and bijective \cite{Kla25}, thereby confirming a
conjecture of Marczinzik for monomial algebras. These developments
suggest that the Auslander-Reiten bijection is not merely a consequence
of the Auslander-Gorenstein property, but may also serve as an effective
tool for detecting this property.

Although the Auslander-Reiten bijection is defined in terms of the
projective resolutions of indecomposable injective modules, 
the degrees at which these modules first appear in the minimal injective
coresolution completely determine the $n$-Gorenstein property.

Our first aim is to formulate this observation as a Serre-type condition.
Let $R$ be a two-sided Noetherian ring.
For an indecomposable injective $R$-module $I$, 
let $\occ_R(I)$ be the occurrence degree 
at which $I$ first appears in the minimal injective coresolution of $R$.  
For a nonnegative integer $n$, we introduce the condition
\[
  \occ_R(I)\geq \min\{n,\fdim_R I\}
\]
for every indecomposable injective $I$, and prove that it is equivalent to
$R$ being $n$-Gorenstein.  
In the commutative Noetherian case this is
compatible with the local characterization of Fossum and Reiten
\cite{RF72}.  
For Artin algebras it becomes a criterion involving the grade
of each simple module and the projective dimension of its injective
envelope.

The second aim is to apply this criterion to 
Nakayama algebras by means of syzygy filtration.
The construction was introduced by Sen for cyclic Nakayama
algebras in \cite{Sen19} and was extended to general Nakayama
algebras in \cite{She26}. 
It associates with a Nakayama algebra $A$ a
smaller Nakayama algebra $\varepsilon(A)$.
For any $n\geq 2$, we prove the  reduction formula
\[
  A\text{ is }n \text{-Gorenstein} 
  \iff
  A\text{ is }2 \text{-Gorenstein and } \varepsilon(A)
  \text{ is }(n-2) \text{-Gorenstein}.
\]
It follows that syzygy filtration 
preserves the Auslander-Gorenstein property and reflects it 
for $2$-Gorenstein Nakayama algebras. 
In the Auslander-Gorenstein case, the selfinjective dimension decreases by two whenever it is at least two.

Finally, let $A$ be an  Auslander-Gorenstein Nakayama algebra.  
We prove that, for every simple module $S$ and every positive odd integer $n$, 
the $n$-th Ext module  of $S$ with coefficients in $A$ is either zero or a 
simple right $A$-module. 
For basic Nakayama algebras, 
this implies the corresponding multiplicity-free statement
for the odd terms in a minimal injective coresolution of $A$. 
For split basic Nakayama algebras over a field, 
this establishes the dimension bound conjectured 
by  Kl\'asz, Kleinau and Marczinzik \cite{KKM26}.

The paper is organized as follows.
Section~2 establishes relationships among grades, occurrence degrees,
and flat dimensions of modules. 
Section~3 introduces the
Serre-type condition and characterizes $n$-Gorenstein rings in terms
of occurrence degrees and  flat dimensions. 
In Section~4, we study the behavior of
Auslander-Gorenstein Nakayama algebras under syzygy filtration.
Finally, Section~5 applies these results to the structure of odd Ext
modules and obtains a module-theoretic strengthening of the dimension
bound conjecture.

Throughout this paper, all modules are left modules unless otherwise stated.

\section{Grades, occurrence degrees and flat dimensions}
\label{sec:grade-occ-fdim}

In this section, we study the relationship between 
the occurrence degree and flat dimension 
of indecomposable injective modules over Noetherian rings.
We refer the reader  to  \cite{Lam99} for foundational background  
on rings and their modules.

Let $R$ be a ring. 
For an $R$-module $M$, 
a submodule $N$ of $M$ is called essential 
if the intersection $N\cap L$ is nonzero for every nonzero submodule $L$ of $M$.
The intersection of all essential submodules of $M$ is precisely 
the socle of $M$, that is, the sum of all simple submodules of $M$.

For every $R$-module $M$, an injective envelope  $I(M)$ of $M$ 
is an injective $R$-module containing $M$ as an essential submodule.
The injective envelope of every $R$-module $M$ always exists and 
any two injective envelopes of $M$ are isomorphic over $M$.

Let $R$ be a left Noetherian ring. 
For a finitely generated $R$-module $X$, 
the grade of $X$ is given by
\[
    \grade_RX=\min\{n\ge 0\mid \Ext^n_R(X,R)\neq 0\}.
\]
The minimum of the empty set is understood to be $\infty$.
Throughout this paper, we use the conventions
\[
n<\infty,\text{ and } \infty \pm n=\infty
\]
for every integer $n\geq 0$.

The following elementary observation will be used repeatedly.

\begin{lem}\label{lem:grade-flat}
Let $X$ be a finitely generated  $R$-module and let $M$ be an $R$-module. 
For $n\geq 0$, if
\[
\Ext^n_R(X,M)\neq 0,
\]
then
\[
\grade_R X\leq n+\fdim_R M.
\]
\end{lem}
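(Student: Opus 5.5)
Assume $\fdim_R M=d<\infty$, since otherwise there is nothing to prove. The plan is to show that $\Ext^n_R(X,M)=0$ whenever $n>\grade_R X - d$. Equivalently, we show that $\Ext^n_R(X,M)\neq 0$ forces $\grade_R X\le n+d$. The main tool is a flat resolution of $M$ together with the standard isomorphism relating $\Ext$ out of a finitely generated module over a left Noetherian ring to $\Ext$ into $R$ tensored with a flat module.

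The first step is to choose a flat resolution
\[
0\lto F_d\lto \cdots\lto F_1\lto F_0\lto M\lto 0 .
\]
The key input is that for finitely generated $X$ over a left Noetherian ring, $X$ admits a resolution $P_\bullet$ by finitely generated free modules. For any flat module $F$ we then have a natural isomorphism
\[
\Hom_R(P_\bullet,R)\otimes_R F\cong \Hom_R(P_\bullet,F).
\]
Since $F$ is flat, $-\otimes_R F$ is exact, and passing to cohomology gives
\[
\Ext^i_R(X,F)\cong \Ext^i_R(X,R)\otimes_R F .
\]
In particular, $\Ext^i_R(X,F_j)=0$ for all $i<g:=\grade_R X$ and all $j$.

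Next, split the resolution into short exact sequences
\[
0\to K_{j+1}\to F_j\to K_j\to 0, \qquad K_0=M,\quad K_d=F_d .
\]
Now argue by dimension shifting, and suppose $n+d<g$. The long exact sequence gives
\[
\Ext^n_R(X,K_j)\hookrightarrow \Ext^{n+1}_R(X,K_{j+1})
\]
as long as $\Ext^n_R(X,F_j)=0$, and this holds because $n<g$. Iterating, we obtain an injection
\[
\Ext^n_R(X,M)\hookrightarrow \Ext^{n+d}_R(X,F_d).
\]
Each intermediate degree $n+j$ with $j\le d-1$ is below $g$, so every step is valid. Finally, $\Ext^{n+d}_R(X,F_d)=0$ because $n+d<g$. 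Hence $\Ext^n_R(X,M)=0$, which is the contrapositive of the claim.

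The only delicate point is the tensor isomorphism $\Hom_R(P,R)\otimes_R F\cong\Hom_R(P,F)$ for finitely generated projective $P$. This is standard, since it holds for $P=R$ and is additive. It is precisely where finite generation of $X$ and the Noetherian hypothesis, which give a resolution by finitely generated free modules, are used. The remainder is routine bookkeeping of indices.
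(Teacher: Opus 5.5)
Your proof is correct. It differs from the paper's only in how you get $\Ext^i_R(X,F)=0$ for flat $F$ and $i<\grade_R X$; the dimension shifting along a flat resolution of $M$ is the same.

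The paper proves this vanishing in two steps. It notes that $\Ext^i_R(X,-)$ commutes with filtered direct limits, since $X$ has a resolution by finitely generated projectives. It then invokes the Govorov--Lazard theorem to write a flat module as a filtered colimit of finitely generated free modules.

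You instead use tensor evaluation $\Hom_R(P_\bullet,R)\otimes_R F\cong\Hom_R(P_\bullet,F)$, together with exactness of $-\otimes_R F$ on right modules. This yields the explicit formula $\Ext^i_R(X,F)\cong\Ext^i_R(X,R)\otimes_R F$. Your route is more elementary, since it needs no Lazard, and it gives a sharper statement. Both arguments rest on the same input: $X$ admits a resolution by finitely generated projectives, which is where left Noetherianity is used.

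Two small slips should be fixed in the write-up. Your opening sentence should say $\Ext^n_R(X,M)=0$ whenever $n<\grade_R X-d$, not $n>\grade_R X-d$. The generic shifting step should read $\Ext^{n+j}_R(X,K_j)\hookrightarrow\Ext^{n+j+1}_R(X,K_{j+1})$, valid because $\Ext^{n+j}_R(X,F_j)=0$ when $n+j<g$. Your following sentence shows this is what you meant.
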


\begin{proof}
The assertion is trivial when $\fdim_R M=\infty$. 
Suppose that $d=\fdim_R M$ is finite 
and suppose, to the contrary, that $\grade_R X>n+d$. Then
\[
\Ext_R^i(X,R)=0,
\]
for  every $0\leq i\leq n+d$.
Since $R$ is left Noetherian and $X$ is finitely generated, 
the module $X$ admits a projective resolution by 
finitely generated projective modules. 
Hence, $\Ext_R^i(X,-)$ commutes with filtered direct limits. 
By the Govorov-Lazard theorem  \cite{Laz69}, 
every flat  $R$-module is a filtered direct limit of 
finitely generated free modules. It follows  that
\begin{equation}\label{eq:vanish-flat}
\Ext_R^i(X,F)=0,
\end{equation}
for  every $0\leq i\leq n+d$ and every flat $R$-module $F$.

If $d=0$, then $M$ is flat, and \eqref{eq:vanish-flat} immediately gives that
$\Ext_R^n(X,M)$ is zero,
a contradiction. 
Assume now that $d\geq 1$, and choose a flat resolution
\[
0\lto F_d\overset{\partial^d}\lto F_{d-1}\overset{\partial^{d-1}}\lto\cdots
\lto F_0\overset{\partial^{0}}\lto M\lto 0
\]
where every $F_j$ is flat.
For $0\leq j\leq d$, let $K_j$ be the image of $\partial^j$. 
Then $K_d$ is isomorphic to $F_d$.
The flat resolution 
and \eqref{eq:vanish-flat} give an isomorphism
\[
\Ext_R^{n+j}(X,K_j)\cong \Ext_R^{n+j+1}(X,K_{j+1})
\]
by dimension shifting. Consequently,
\[
\Ext_R^n(X,M)
\cong \Ext_R^{n+1}(X,K_1)
\cong\cdots\cong
\Ext_R^{n+d}(X,K_d)=0,
\]
again a contradiction.
\end{proof}

Let $R$ be a left Noetherian ring. 
Consider a minimal injective coresolution 
\[
    0 \to R\to I^0(R)\overset{f^0}\lto I^1(R)\overset{f^1}\lto \cdots
\]
of $R$, where $\Ker f^n$ is an essential submodule of $I^n(R)$ for every $n\geq 0$.
For an indecomposable injective $R$-module $I$, 
its occurrence degree is defined  by
\[
\occ_R(I)=
\min\{\,i\geq 0\mid I\text{ is a direct summand of }I^i(R)\,\}.
\]
The minimum of the empty set is understood to be $\infty$.

The following lemma shows that the occurrence degree 
of an injective envelope of a simple module is determined by its grade.

\begin{lem} \label{lem:simple-grade}
Let $S$ be a simple $R$-module. Then
\[
    \occ_RI(S)=\grade_RS.
\]
\end{lem}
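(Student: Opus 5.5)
The plan is to compute $\Ext^i_R(S,R)$ directly from the minimal injective coresolution of $R$. For each $i\geq 0$ there is the standard isomorphism
\[
\Ext^i_R(S,R)\cong \Hom_R(S,I^i(R)),
\]
which holds precisely because the coresolution is minimal. I would establish it as follows. Apply $\Hom_R(S,-)$ to the complex $I^\bullet(R)$. Let $\varphi\colon S\to I^i(R)$ be nonzero. Since $S$ is simple, $\varphi$ is injective, so its image is a simple submodule of $I^i(R)$. Because $\Ker f^i$ is essential in $I^i(R)$, every simple submodule of $I^i(R)$ lies in $\Ker f^i$. Hence $f^i\circ\varphi=0$, and all the induced differentials $\Hom_R(S,I^{i-1}(R))\to\Hom_R(S,I^i(R))$ vanish. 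The cohomology of the complex is therefore the complex itself, which gives the displayed isomorphism. The fact that the socle is contained in every essential submodule was recalled earlier in the section, so this is essentially immediate.

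From this, $\grade_R S$ is the least $i$ with $\Hom_R(S,I^i(R))\neq 0$. It remains to show that $\Hom_R(S,I^i(R))\neq 0$ if and only if $I(S)$ is a direct summand of $I^i(R)$.

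For the backward direction, if $I(S)$ is a summand of $I^i(R)$, then $S\subseteq I(S)\subseteq I^i(R)$ gives a nonzero map $S\to I^i(R)$.

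For the forward direction, a nonzero $\varphi\colon S\to I^i(R)$ is an embedding. By injectivity it extends to a map $I(S)\to I^i(R)$. This extension is injective, since its kernel meets the essential submodule $S$ trivially and is therefore zero. The image is then an injective submodule of $I^i(R)$, hence a direct summand, and it is isomorphic to $I(S)$.

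This forward direction is the only mildly delicate step, and it uses only standard properties of injective envelopes. Notably, we do not need a Krull–Schmidt/Matlis decomposition of $I^i(R)$ here: the definition of $\occ_R$ asks only whether $I(S)$ is a direct summand, and that is exactly what we produce. Note also that $I(S)$ is indecomposable, because $S$ is simple and hence uniform, so $\occ_R I(S)$ is defined.

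Combining the two steps, $\occ_R I(S)$ and $\grade_R S$ are both the minimum of the same set of indices $i$, including the case where this set is empty and both values equal $\infty$.
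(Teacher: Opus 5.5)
Your proposal is correct and follows essentially the same route as the paper's proof. You use minimality to identify $\Ext^i_R(S,R)\cong\Hom_R(S,I^i(R))$, then extend a nonzero map from $S$ to an embedding of $I(S)$ as a summand. Your proof spells out the converse direction and the indecomposability of $I(S)$, both of which the paper leaves implicit.
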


\begin{proof}
Take a minimal injective coresolution of $R$ as follows
\[
    0 \to R\to  I^0(R)\overset{f^0}\lto I^1(R)\overset{f^1}\lto \cdots.
\]
For every $n\ge 0$, $\Ker f^n$ is an essential submodule of $I^n(R)$.
Since $S$ is simple, $\Hom_R(S,f^n)$ is zero.
By applying $\Hom_R(S,-)$ to the minimal injective coresolution, 
there is an isomorphism
\[
    \Ext^n_R(S,R)\cong \Hom_R(S,I^n(R)).
\]

Suppose that there is a nonzero homomorphism
\[f\colon S\to I^n(R).\] 
Since $S$ is simple, the homomorphism $f$ is injective. 
By the injectivity of $I^n(R)$, the map $f$ extends to a homomorphism
\[
\widetilde f\colon I(S)\longrightarrow I^n(R).
\]
Because $S$ is an essential submodule of $I(S)$, 
the homomorphism $\widetilde f$ is injective.
Since $I(S)$ is injective, 
this embedding splits and $I(S)$ is a direct summand of $I^n(R)$. 
It follows that $\Ext^n_R(S,R)$ is nonzero if and only if 
$I(S)$ is a  direct summand of  $I^n(R)$.
Therefore, $\occ_RI(S)$ and $\grade_RS$ are equal.
\end{proof}

Let $k$ be a commutative Noetherian ring and 
let $R$ be a module finite $k$-algebra.
For a prime two-sided  ideal $P$ of $R$, 
by \cite[V.4]{Gab62} 
there exists an indecomposable injective $R$-module $I_P$
such that $I(R/P)$ is a finite direct sum of copies of $I_P$.
This gives a bijection between
the prime  ideals of $R$ and the isomorphism classes of 
indecomposable injective $R$-modules.

Let $P$ be a prime ideal of $R$ and put $\fkp=P\cap k$.
Then $\fkp$ is a prime ideal of $k$.
For an $R$-module $M$,  
the localization $M_\fkp$ at $\fkp$ is an $R_\fkp$-module.
Localizing at $\fkp$ is an exact functor which preserves injective envelopes.

We need the following; see \cite[Lemma 5.1]{Skr17} and \cite[Corollary 1.3]{Bass62}.

\begin{lem} \label{lem:noeth-algebra}
Let $P$ be a prime  ideal of $R$ and put
$\fkp=P\cap k$. Then the following statements hold.
\begin{enumerate}
    \item $I(R/P)_\fkp\cong I(R_\fkp/PR_\fkp)$;
    \item $R_\fkp/PR_\fkp$ is a simple Artinian ring;
    \item $(I_P)_\fkp$ is an indecomposable injective $R_\fkp$-module;
    \item $\occ_RI_P=\occ_{R_\fkp}(I_P)_\fkp$.
\end{enumerate}
\end{lem}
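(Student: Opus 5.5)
Since $R$ is module finite over the commutative Noetherian ring $k$, it is a Noetherian $k$-algebra, and the four statements of Lemma~\ref{lem:noeth-algebra} follow from the standard behaviour of localization at the central multiplicative set $k\setminus\fkp$. I would take them in order.

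\textbf{Part (1).} Localization at $\fkp$ is exact and preserves injective envelopes; this is the Noetherian fact that $E_\fkp$ is injective over $R_\fkp$ for $E$ injective, with essentiality preserved, as in \cite{Bass62}. Applying this to $R/P\subseteq I(R/P)$ makes $(R/P)_\fkp\subseteq I(R/P)_\fkp$ an injective envelope over $R_\fkp$. Exactness gives $(R/P)_\fkp\cong R_\fkp/PR_\fkp$, which proves (1).

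\textbf{Part (2).} The ring $R/P$ is prime and module finite over $k/\fkp$, and $k/\fkp\to R/P$ is injective. Localizing gives $R_\fkp/PR_\fkp$, which is prime and module finite over the field $\kappa(\fkp)=k_\fkp/\fkp k_\fkp$. Indeed, $(R/P)\otimes_{k/\fkp}\kappa(\fkp)$ is a central localization of a prime ring at nonzero central elements, so it remains prime, and it is nonzero. A prime ring that is finite dimensional over a field is Artinian, hence simple Artinian by Wedderburn--Artin, which proves (2).

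\textbf{Part (3).} By (1) and (2), $I(R/P)_\fkp$ is the injective envelope of a semisimple module of the form $S^m$. Hence it equals $I(S)^m$, with $I(S)$ indecomposable. Since $I(R/P)\cong I_P^{r}$, localizing gives $(I_P)_\fkp^{r}\cong I(S)^m$. By Krull--Schmidt for injectives (Matlis), $(I_P)_\fkp$ is a direct sum of copies of $I(S)$. It is nonzero because the associated prime of $I_P$ is $P$ and elements of $k\setminus \fkp$ act injectively on $I_P$. To get exactly one copy I would argue directly: $I_P$ is indecomposable, and localization at the central set $k\setminus\fkp$, which acts bijectively on $I_P$, does nothing, i.e.\ $I_P\cong(I_P)_\fkp$. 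An indecomposable remains indecomposable, which proves (3). The key claim is that every $s\in k\setminus\fkp$ acts as an automorphism of $I_P$. Injectivity holds because a nonzero kernel would contain a submodule annihilated by some $Q\supseteq P$ with $s\in Q\cap k$; essentiality forces associated primes equal to $P$, and $s\notin P$. Surjectivity then follows since $I_P$ is injective and $s$ is central with $\Ker s=0$, so multiplication by $s$ is split.

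\textbf{Part (4).} Localizing a minimal injective coresolution of $R$ gives a minimal injective coresolution of $R_\fkp$, because minimality (essential kernels) is preserved. Also $(I^i)_\fkp$ decomposes as a sum of the $(I_Q)_\fkp$ with $Q\cap k\subseteq\fkp$, each either zero or an indecomposable as in (3). For $Q\neq Q'$ these localized indecomposables remain non-isomorphic, since both are unchanged by localization and $I_Q\not\cong I_{Q'}$. Hence $I_P$ is a summand of $I^i(R)$ if and only if $(I_P)_\fkp$ is a summand of $I^i(R)_\fkp$, giving $\occ_RI_P=\occ_{R_\fkp}(I_P)_\fkp$.

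The main obstacle is this bookkeeping of indecomposable summands under localization. I would handle it using the fact that $k\setminus\fkp$ acts bijectively on $I_Q$ whenever $Q\cap k\subseteq\fkp$, and nilpotently-torsion otherwise, in which case $(I_Q)_\fkp=0$. I would cite \cite[Lemma 5.1]{Skr17} for these facts.
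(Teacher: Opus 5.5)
The paper does not prove this lemma; it only cites \cite[Lemma 5.1]{Skr17} and \cite[Corollary 1.3]{Bass62}. Your proposal is correct, and it essentially reconstructs the content of those references.

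Parts (1) and (2) are handled properly. For (2), the images of $k\setminus\fkp$ are nonzero central elements of the prime ring $R/P$, hence regular. So the central localization stays prime and nonzero, and a prime finite-dimensional $\kappa(\fkp)$-algebra is simple Artinian.

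The key idea is that every $s\in k\setminus\fkp$ acts by an automorphism on $I_Q$ whenever $Q\cap k\subseteq\fkp$. Hence $(I_Q)_\fkp\cong I_Q$ already as $R$-modules, which is slightly stronger than (3). This one fact gives indecomposability in (3) and the non-isomorphism of the surviving summands in (4).

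Two steps need a cleaner formulation.
\begin{enumerate}
\item For injectivity of $s$, the detour through ``some $Q\supseteq P$'' is muddled. The kernel of $s$ is a nonzero submodule of the uniform module $I_P$, so it meets $I_P\cap R/P$ nontrivially. Every nonzero left submodule of $R/P$ has annihilator exactly $P$, since $P$ is prime. This forces $s\in P\cap k=\fkp$, a contradiction.
\item For surjectivity you must invoke indecomposability explicitly. The image $sI_P\cong I_P$ is a nonzero injective submodule, hence a direct summand, hence all of $I_P$.
\end{enumerate}

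Your Krull--Schmidt paragraph is not needed for (3) itself. Combined with (3), however, it yields $(I_P)_\fkp\cong I(S)$, which is how Theorem~\ref{thm:occ-fdim} uses the lemma.

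Like the paper, you still outsource two facts. The first is preservation of injective envelopes under localization, which (4) needs for non-finitely generated cosyzygies. The second is the vanishing $(I_Q)_\fkp=0$ for $Q\cap k\not\subseteq\fkp$. This second fact has a short proof: for $\mathfrak q=Q\cap k$, the submodule $\Gamma_{\mathfrak q}(I_Q)$ is nonzero and, by Artin--Rees, injective, so it equals $I_Q$.

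In short, the citation buys brevity, while your route gives an almost self-contained argument with a stronger form of (3).
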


We have the following comparison result.

\begin{thm} \label{thm:occ-fdim}
    Let $R$ be a module finite algebra over a commutative Noetherian ring $k$.
    Then the inequality
    \[\occ_RI\leq \fdim_RI\]
    holds for every indecomposable injective $R$-module $I$.
\end{thm}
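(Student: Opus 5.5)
Since every indecomposable injective $R$-module is of the form $I_P$ for a prime ideal $P$ of $R$, we fix such a $P$, put $\fkp=P\cap k$, and reduce to a local situation via Lemma~\ref{lem:noeth-algebra}. By part (4), $\occ_R I_P=\occ_{R_\fkp}(I_P)_\fkp$. Localization is exact and preserves flatness, so any flat resolution of $I_P$ over $R$ localizes to a flat resolution of $(I_P)_\fkp$ over $R_\fkp$. This gives $\fdim_{R_\fkp}(I_P)_\fkp\leq \fdim_R I_P$. It therefore suffices to prove
\[
\occ_{R_\fkp}(I_P)_\fkp\leq \fdim_{R_\fkp}(I_P)_\fkp,
\]
where $R_\fkp$ is again module finite over the Noetherian ring $k_\fkp$, hence two-sided Noetherian. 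If the flat dimension is infinite there is nothing to prove, so we assume it is finite.

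In the local situation we use the simple Artinian ring $R_\fkp/PR_\fkp$ from part (2). It has a simple module $S$, which we view as a finitely generated $R_\fkp$-module. By part (1), $I(R_\fkp/PR_\fkp)\cong I(R/P)_\fkp$, which is a finite direct sum of copies of $(I_P)_\fkp$. Since $R_\fkp/PR_\fkp$ is a finite direct sum of copies of $S$, we get $I(S)\cong (I_P)_\fkp$, using part (3) for indecomposability. Lemma~\ref{lem:simple-grade} then gives
\[
\occ_{R_\fkp}(I_P)_\fkp=\occ_{R_\fkp}I(S)=\grade_{R_\fkp}S.
\]

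To conclude we apply Lemma~\ref{lem:grade-flat} with $X=S$, $M=I(S)$ and $n=0$. The inclusion $S\hookrightarrow I(S)$ shows that $\Hom_{R_\fkp}(S,I(S))\neq 0$, so the lemma yields
\[
\grade_{R_\fkp}S\leq 0+\fdim_{R_\fkp}I(S).
\]
Combining this with the two previous displays gives $\occ_R I_P\leq \fdim_R I_P$, as required.

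The main obstacle is the identification $I(S)\cong (I_P)_\fkp$. This requires that $R_\fkp/PR_\fkp$ is a simple module over $R_\fkp$ modulo its ideal, and that localization carries the decomposition $I(R/P)\cong I_P^{\oplus m}$ to $I(R_\fkp/PR_\fkp)$. Krull--Schmidt for finite direct sums of indecomposable injectives (which have local endomorphism rings) then matches the summands. Everything else in the argument is formal.
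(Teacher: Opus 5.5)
Your proposal is correct and follows essentially the same route as the paper: localize at $\fkp=P\cap k$, identify $(I_P)_\fkp$ with $I(S)$ for the simple module $S$ of $R_\fkp/PR_\fkp$, use Lemma~\ref{lem:simple-grade} and Lemma~\ref{lem:grade-flat} with $n=0$, and bound flat dimension under localization. Your appeal to Krull--Schmidt for the isomorphism $I(S)\cong(I_P)_\fkp$ makes explicit a step the paper leaves implicit.
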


\begin{proof}
Let $I$ be an indecomposable injective $R$-module.
Then there is a prime ideal $P$ of $R$ such that $I$ 
is isomorphic to $I_P$.
Put $\fkp=P\cap k$. 
By Lemma \ref{lem:noeth-algebra}, 
$R_\fkp/PR_\fkp$ is a simple Artinian ring.
There exists a simple $R_\fkp$-module $S_P$ such that $R_\fkp/PR_\fkp$
is a finite direct sum of copies of $S_P$.
Since $I(R/P)_\fkp$ and $I(R_\fkp/PR_\fkp)$ are isomorphic,
it follows that $(I_P)_\fkp$ is isomorphic to 
an injective envelope $I(S_P)$ of $S_P$.

By Lemmas \ref{lem:simple-grade} and \ref{lem:noeth-algebra}, we have
\[
\occ_RI_P=\occ_{R_\fkp}(I_P)_\fkp=\grade_{R_\fkp}S_P. 
\]
Lemma \ref{lem:grade-flat} gives  an inequality
\[
\grade_{R_\fkp}S_P\leq \fdim_{R_\fkp}(I_P)_\fkp.
\]
Localizing a flat resolution
of $I_P$ gives a flat resolution of $(I_P)_\fkp$ over $R_\fkp$.
Then 
\[\fdim_{R_\fkp}(I_P)_\fkp\leq \fdim_{R}I_P.\] 
Combining the  displayed formulas gives the inequality
\[\occ_RI\leq \fdim_RI\]
as required.
\end{proof}

\section{A Serre-type criterion for Gorenstein rings}
\label{sec:serre-condition}
In this section, we introduce a  Serre-type condition for 
two-sided Noetherian rings, 
and prove that a two-sided Noetherian ring is $n$-Gorenstein 
if and only if this condition holds.

Given an integer $n \geq 0$, 
a two-sided Noetherian ring  $R$ is called {$n$-Gorenstein} 
if the flat dimension of $I^g(R)$ is at most $g$ for all $g<n$, 
and $R$ is called {Auslander-Gorenstein} 
if $R$ is $n$-Gorenstein for all $n$ and 
the two-sided selfinjective dimensions of $R$  are finite. 
Furthermore, $R$ is called {Auslander regular} 
if $R$ is Auslander-Gorenstein and has finite global dimension.

The $n$-Gorenstein property is left-right symmetric; 
see \cite[Theorem 3.7]{FGR75}.

\begin{lem} \label{lem:left-right}
For any $n\ge 0$, $R$ is $n$-Gorenstein if and only if 
$R^{\op}$ is $n$-Gorenstein. 
\end{lem}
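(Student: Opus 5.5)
The plan is to reduce Lemma~\ref{lem:left-right} to Auslander's classical characterization of $n$-Gorenstein rings by a condition that is visibly left--right symmetric. That condition is the following: for every $1\le i\le n$ and every finitely generated right $R$-module $M$, every submodule $N$ of $\Ext^i_{R^{\op}}(M,R)$ satisfies $\grade N\ge i$ (grade taken on the left). The statement of \cite[Theorem 3.7]{FGR75} shows that this Auslander condition for right modules is equivalent to $\fdim I^g(R)\le g$ for $g<n$, where the coresolution is taken on the left. Applying the same result to $R^{\op}$ then shows that the left-module version of the Auslander condition is equivalent to the right-hand flat dimension condition. So it suffices to show that the Auslander condition on right modules is equivalent to the Auslander condition on left modules.

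\textbf{Step 1 (flat dimensions to grades).} Assume $\fdim_R I^g(R)\le g$ for $g<n$. Let $N\subseteq \Ext^i(M,R)$ be finitely generated with $i\le n$. I would show $\Ext^j_R(N,R)=0$ for $j<i$, computing $\Ext^j_R(N,R)$ from $\Hom_R(N,I^\bullet(R))$. The key is that $\Hom_R(N,I^g)$ is controlled by Lemma~\ref{lem:grade-flat}: its vanishing is governed by grades and flat dimensions. Combined with the identification $\Ext^i(M,R)\cong \Hom(\text{something}, I^\bullet)$ via the standard double complex $\Hom_{R^{\op}}(P_\bullet, \Hom_R(\ ,I^\bullet))$, which uses flatness-type Tor/Hom adjunctions $\Hom_R(\Hom_{R^{\op}}(P,R),I)\cong P\otimes_R I$, one gets a spectral sequence $\Tor$-computing $\Ext^p_R(\Ext^q_{R^{\op}}(M,R),R)$ converging to $M$ in degree $0$. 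The flat dimension bounds kill the terms with $p<q$.

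\textbf{Step 2 (grades to flat dimensions).} Conversely, from the grade condition one shows $\Tor_{g+1}(X,I^g)=0$ for finitely generated right $X$ by induction on $g$. This uses $\Tor_j^R(X,I)\cong \Hom_R(\Ext^j_{R^{\op}}(X,R),I)$ for injective $I$ (valid since $R$ is Noetherian and $X$ is finitely presented). A nonzero map from a submodule of $\Ext^{g+1}(X,R)$ into $I^g(R)$ would produce, via essentiality of $\Ker f^g$ as in the proof of Lemma~\ref{lem:simple-grade}, a nonzero $\Ext^g$ of a submodule, contradicting grade $\ge g+1$.

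The main obstacle is the spectral sequence bookkeeping in Step~1 that converts the one-sided injective data into statements about right modules. In practice I would simply cite \cite[Theorem 3.7]{FGR75} for this symmetric equivalence, which is how the paper itself presents the lemma.
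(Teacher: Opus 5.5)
Citing \cite[Theorem 3.7]{FGR75}, your fallback, is exactly what the paper does, and on its own it is a complete proof. The cited theorem of Auslander already gives the left--right symmetry of the flat-dimension conditions on the minimal injective coresolutions of ${}_RR$ and $R_R$, so once you cite it no reduction is needed. The self-contained reduction you sketch, however, has a genuine gap.

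Steps~1 and~2 never leave one side. Step~1 starts from $\fdim_R I^g(R)\le g$ for $g<n$ (the left coresolution). It concludes $\grade_R N\ge i$ for left submodules $N\subseteq \Ext^i_{R^{\op}}(M,R)$, with $M$ a right module. Step~2 starts from precisely this condition and returns to $\operatorname{Tor}^R_{g+1}(X,I^g(R))=0$ for right modules $X$, which is the left flat-dimension condition again. Together they only re-prove the one-sided equivalence you had already attributed to \cite{FGR75}.

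The step you yourself singled out as sufficient is never addressed. That step is that the Auslander condition for right modules implies the one for left modules: submodules of $\Ext^i_R(L,R)$ for left modules $L$ should have right grade at least $i$, equivalently $\fdim I^g(R_R)\le g$ for $g<n$. No step touches the coresolution of $R_R$ or any right grade. This passage between the two sides is the whole nontrivial content of Auslander's theorem. Without it, your argument goes in a circle and only recovers its hypothesis.

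Two smaller points. First, Step~1 needs no spectral sequence. Use $\operatorname{Tor}^R_q(Y,I)\cong\Hom_R(\Ext^q_{R^{\op}}(Y,R),I)$ for injective $I$, together with dimension shifting $\Ext^i_{R^{\op}}(M,R)\cong\Ext^{j+1}_{R^{\op}}(\Omega^{i-j-1}M,R)$. This gives $\Hom_R(\Ext^i_{R^{\op}}(M,R),I^j(R))=0$ for $j<i\le n$. Hence $\Hom_R(N,I^j(R))=0$ by injectivity of $I^j(R)$, and so $\grade_R N\ge i$. Second, you claim the double complex $P_\bullet\otimes_R I^\bullet(R)$ yields a spectral sequence with $E_2$-terms $\Ext^p_R(\Ext^q_{R^{\op}}(M,R),R)$ converging to $M$. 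This is not justified in general. The double complex is unbounded along every total degree, so convergence needs extra input such as $\pdim M<\infty$, $\idim {}_RR<\infty$, or a truncation argument.
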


Let $R$ be a commutative Noetherian local ring 
and $\fkm$ the maximal ideal of $R$.
Recall that $R$ is called Gorenstein provided that 
its selfinjective dimension  is finite.
The grade of the $R$-module $R/\fkm$ is called the depth of $R$.
It is known that $R$ is Gorenstein if 
and only if the depth and selfinjective dimension of $R$ are equal \cite{BH98}.

We recall the following 
characterization of commutative $n$-Gorenstein rings.

\begin{lem} 
    Let $R$ be a commutative Noetherian ring and $n \ge 0$. 
    The following are equivalent.
    \begin{enumerate}
        \item $R$ is $n$-Gorenstein;
        \item $\depth R_\fkp\ge \min\{n,\fdim_{R}I(R/\fkp)\}$ 
        for every prime ideal $\fkp$ of $R$;
        \item  $R_\fkp$ is Gorenstein for every prime ideal 
        $\fkp$ of $R$ with $\depth R_\fkp<n$.
    \end{enumerate}
\end{lem}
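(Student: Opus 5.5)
Throughout, fix a prime $\fkp$ and put $I_\fkp=I(R/\fkp)$. The plan is to prove (1)$\Leftrightarrow$(2) and (2)$\Leftrightarrow$(3) separately, translating everything to the local rings $R_\fkp$ using the results of Section~\ref{sec:grade-occ-fdim}.

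\textbf{Dictionary.} The residue field of $R_\fkp$ is $R_\fkp/\fkp R_\fkp$, so Lemmas~\ref{lem:noeth-algebra} and \ref{lem:simple-grade} (with $k=R$) give
\[
\occ_R I_\fkp=\grade_{R_\fkp}(R_\fkp/\fkp R_\fkp)=\depth R_\fkp .
\]
Hence condition (2) says exactly $\occ_R I\geq \min\{n,\fdim_R I\}$ for every indecomposable injective $I$. By Theorem~\ref{thm:occ-fdim} we also have $\occ_R I_\fkp\leq \fdim_R I_\fkp$.

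\textbf{(1)$\Rightarrow$(2).} Suppose $\occ_R I_\fkp=g<n$. Then $I_\fkp$ is a direct summand of $I^g(R)$. Since $R$ is $n$-Gorenstein, $\fdim I^g(R)\leq g$, so $\fdim I_\fkp\leq g$. Therefore $\min\{n,\fdim_R I_\fkp\}\leq g=\occ_R I_\fkp$. If instead $\occ_R I_\fkp\geq n$, the inequality in (2) is trivial.

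\textbf{(2)$\Rightarrow$(1).} Fix $g<n$. Every indecomposable summand $I$ of $I^g(R)$ has $\occ_R I\leq g<n$, so (2) forces $\fdim_R I\leq \occ_R I\leq g$. The remaining issue is that $I^g(R)$ may be an infinite direct sum. Over a Noetherian ring Tor commutes with direct sums, so $\fdim$ of a direct sum is the supremum of the $\fdim$ of its summands. This gives $\fdim I^g(R)\leq g$.

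\textbf{(2)$\Leftrightarrow$(3).} This step is the main obstacle, since it needs the structure theory of Bass and Fossum--Foxby--Griffith--Reiten. Two facts will be used.
\begin{enumerate}
\item Localizing a flat resolution, together with $(I_\fkp)_\fkp=I_\fkp$, gives $\fdim_R I_\fkp=\fdim_{R_\fkp}I_\fkp$.
\item For a local ring $(S,\fkm)$, one has $\fdim_S E(S/\fkm)<\infty$ if and only if $S$ is Gorenstein, and in that case $\fdim_S E(S/\fkm)=\dim S=\depth S$. One direction uses Matlis duality, $\fdim E=\idim \widehat S$; the other is the classical Gorenstein case.
\end{enumerate}

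Now compare the two conditions at a prime $\fkp$.
\begin{itemize}
\item If $\depth R_\fkp\geq n$, both (2) and (3) hold at $\fkp$.
\item If $\depth R_\fkp<n$, then (2) says $\fdim_{R_\fkp}I_\fkp\leq\depth R_\fkp<\infty$, which makes $R_\fkp$ Gorenstein. Conversely, if $R_\fkp$ is Gorenstein, then $\fdim_{R_\fkp}I_\fkp=\depth R_\fkp$, so (2) holds at $\fkp$.
\end{itemize}

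If one wants to avoid fact 2, an alternative route to (1)$\Leftrightarrow$(3) is to cite directly the Fossum--Reiten local characterization \cite{RF72}: $R$ is $n$-Gorenstein iff $R_\fkp$ is Gorenstein whenever $\depth R_\fkp<n$.
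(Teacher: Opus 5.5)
Your proof is correct, but it connects (1) to the other conditions differently from the paper.

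\textbf{What the paper does.} It obtains (1)$\iff$(3) by citing Fossum--Reiten \cite{RF72}. It proves (2)$\iff$(3) essentially as you do. It identifies $\fdim_R I(R/\fkp)=\fdim_{R_\fkp}I(R_\fkp/\fkp R_\fkp)=\idim_{R_\fkp}R_\fkp$. This is at least $\depth R_\fkp$ by Lemma~\ref{lem:grade-flat}, with equality exactly when $R_\fkp$ is Gorenstein.

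\textbf{What you do differently.} You prove (1)$\iff$(2) directly. First you establish $\occ_R I(R/\fkp)=\depth R_\fkp$ from Lemmas~\ref{lem:noeth-algebra} and~\ref{lem:simple-grade}. Then you run the argument that the paper gives only afterwards, in general form, as Theorem~\ref{thm:n-gor-ring}.

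\textbf{What each route buys.} Your route makes the lemma literally the commutative instance of $(G_n)$, which is the conceptual point the paper uses it to motivate. Combined with your (2)$\iff$(3) step, it gives a self-contained proof of the Fossum--Reiten characterization (1)$\iff$(3). The only inputs are Bass/Matlis local facts: depth as the first nonvanishing Bass number, and $\fdim$ of the injective hull of the residue field equal to $\idim R_\fkp$. The paper's route is shorter but uses \cite{RF72} as a black box.

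\textbf{Two small points.}
\begin{enumerate}
\item Localizing a flat resolution only yields $\fdim_{R_\fkp}I(R/\fkp)\leq\fdim_R I(R/\fkp)$. In (3)$\Rightarrow$(2) you need the reverse inequality. It holds because flat $R_\fkp$-modules are flat over $R$.
\item Tor commutes with arbitrary direct sums over any ring, so that step needs no Noetherian hypothesis. Noetherianity is used only to decompose $I^g(R)$ into indecomposable injectives.
\end{enumerate}
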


\begin{proof}
(1) $\iff$ (3) 
The equivalence is established in \cite{RF72}.

(2) $\iff$ (3)
Since $R$ is a commutative Noetherian ring, we have
\[
    \idim_{R_{\fkp}}R_{\fkp}
    =\fdim_{R_\fkp}I(R_\fkp/\fkp R_\fkp).
\]
By Lemma \ref{lem:grade-flat} 
and the local characterization of flat dimension,
\[
    \fdim_RI(R/\fkp)=\fdim_{R_\fkp}I(R_\fkp/\fkp R_\fkp)
    \ge\depth R_\fkp.
\]
The equality holds if and only if $R_{\fkp}$ is Gorenstein.
These two conditions are therefore equivalent.
\end{proof}

Inspired by the above lemma, 
we introduce the following Serre-type condition 
for two-sided Noetherian rings.

\begin{defn}\label{def:serre-condition}
    Let $R$ be a two-sided Noetherian ring and $n \ge 0$.
    We say that $R$  satisfies {$(G_n)$} provided that
    \[
        \occ_R(I)\ge \min\{n,\fdim_RI\}
    \] 
    for every indecomposable injective $R$-module $I$. 
\end{defn}

\begin{rmk}
    If $R$ is a module finite algebra over a commutative Noetherian ring,
    Theorem \ref{thm:occ-fdim}  implies that the inequality 
    \[
        \fdim_RI\ge \occ_RI
    \] 
    holds for every indecomposable injective $R$-module $I$.
    It is unknown whether the inequality holds 
    for arbitrary two-sided Noetherian rings.
    The notation $(G_n)$ used here should not be confused with other
    conditions denoted by $(G_n)$ in commutative algebra; 
    throughout this paper, 
    it refers exclusively to Definition~\ref{def:serre-condition}.
\end{rmk}

The following is the main result of this section.
It gives an alternative characterization of $n$-Gorenstein rings.

\begin{thm} \label{thm:n-gor-ring}
    Let  $R$ be a two-sided Noetherian ring and $n \ge 0$.
    Then $R$ is $n$-Gorenstein if and only if $R$ satisfies $(G_n)$.
\end{thm}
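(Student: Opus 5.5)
The plan is to compare the two conditions summand by summand, using the decomposition of the terms of the minimal injective coresolution into indecomposable injectives. Since $R$ is left Noetherian, every injective $R$-module is a direct sum of indecomposable injective modules (Matlis). So for each $g\ge 0$ I write
\[
I^g(R)\cong\bigoplus_{\lambda\in\Lambda_g} I_\lambda
\]
with each $I_\lambda$ indecomposable injective. By definition of the occurrence degree, every indecomposable injective $I$ that is a direct summand of $I^g(R)$ satisfies $\occ_R(I)\le g$. By Krull--Schmidt--Azumaya for direct sums of modules with local endomorphism rings, these are exactly the modules isomorphic to some $I_\lambda$.

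For the forward direction, assume $R$ is $n$-Gorenstein and let $I$ be an indecomposable injective module.
\begin{itemize}
\item If $\occ_R(I)\ge n$, there is nothing to prove.
\item Otherwise $g=\occ_R(I)<n$ and $I$ is a direct summand of $I^g(R)$. Flat dimension does not increase when passing to direct summands, because $\Tor$ is additive. Hence
\[
\fdim_R I\le \fdim_R I^g(R)\le g=\occ_R(I),
\]
and so $\occ_R(I)\ge \fdim_R I\ge \min\{n,\fdim_R I\}$.
\end{itemize}
This gives $(G_n)$.

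For the converse, assume $(G_n)$ and fix $g<n$. Each summand $I_\lambda$ of $I^g(R)$ has $\occ_R(I_\lambda)\le g<n$.
\begin{itemize}
\item If $\fdim_R I_\lambda\ge n$, then $(G_n)$ would give $\occ_R(I_\lambda)\ge n$, which is a contradiction.
\item Hence the minimum in $(G_n)$ is attained by the flat dimension, and $\fdim_R I_\lambda\le \occ_R(I_\lambda)\le g$.
\end{itemize}
To pass to the whole term, note that $\Tor$ commutes with arbitrary direct sums. Thus for every right $R$-module $X$,
\[
\Tor^R_{g+1}(X,I^g(R))\cong\bigoplus_\lambda \Tor^R_{g+1}(X,I_\lambda)=0,
\]
so $\fdim_R I^g(R)\le g$. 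Since $g<n$ was arbitrary, $R$ is $n$-Gorenstein.

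There is no serious obstacle here. The only points needing care are the structural inputs: the Matlis decomposition of injectives over a left Noetherian ring, the identification of indecomposable summands of $I^g(R)$ with the $I_\lambda$, and the compatibility of flat dimension with direct sums and summands via $\Tor$. The conventions involving $\infty$ are handled by the case split above, namely whether $\occ_R(I)<n$ or not.
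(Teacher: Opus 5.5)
Your proposal is correct and follows essentially the same route as the paper: the forward direction passes to a summand of $I^g(R)$ with $g=\occ_R(I)<n$, and the converse decomposes $I^g(R)$ into indecomposable injectives, using $(G_n)$ to force $\fdim_R I_\lambda\le \occ_R(I_\lambda)\le g$ and then additivity of $\Tor$. Two minor remarks: the paper cites Faith--Walker rather than Matlis for the decomposition, and your Krull--Schmidt--Azumaya identification is harmless but unnecessary, since you only ever use that each $I_\lambda$ is itself a summand of $I^g(R)$.
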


\begin{proof}
    $``\implies"$
    Let $I$ be an indecomposable injective $R$-module.
    If $\occ_RI\ge n$, then the desired inequality is immediate. 
    If $\occ_RI=g< n$, then $I$ is a direct summand of $I^g(R)$, 
    and thus
    \[
        \fdim_RI\leq \fdim_RI^g(R)\leq g=\occ_R(I).
    \]
    Then $R$ satisfies $(G_n)$.

    $``\impliedby"$
    Since $R$ is left Noetherian,
    by \cite{FW67} every injective $R$-module  is a direct sum of 
    indecomposable injective $R$-modules.
    Fix $g<n$ and let $I$ be an indecomposable direct summand  of $I^g(R)$.
    Then $\occ_R(I)\le g<n$. 
    The inequality
    \[
        \occ_RI\geq \min\{n,\fdim_RI\}
    \]
    forces $\fdim_RI< n$ and hence $\fdim_RI\le g$.
    Since direct sums of modules of flat dimension at most 
    $g$ again have flat dimension at most $g$,
    we have $\fdim_RI^g(R)\le g$.
    This holds for every $g<n$.
    It follows that $R$ is $n$-Gorenstein.
\end{proof}

\begin{prop} \label{prop:all-gor-ring}
    Let  $R$ be a two-sided Noetherian ring.  
    The following statements are equivalent.
    \begin{enumerate}
        \item $R$ is $n$-Gorenstein for all $n$;
        \item $\fdim_RI\leq \occ_RI$ 
        for every indecomposable injective $R$-module $I$.
    \end{enumerate}
\end{prop}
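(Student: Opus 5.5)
The plan is to deduce this directly from Theorem~\ref{thm:n-gor-ring} by letting $n$ vary. By that theorem, (1) is equivalent to $R$ satisfying $(G_n)$ for every $n\ge 0$. So it suffices to show that (2) is equivalent to the condition
\[
\occ_R(I)\ge \min\{n,\fdim_R I\}
\]
holding for every indecomposable injective $R$-module $I$ and every $n\ge 0$.

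For (2) $\Rightarrow$ (1), fix $n$ and an indecomposable injective $I$. By (2),
\[
\occ_R I\ge \fdim_R I\ge \min\{n,\fdim_R I\},
\]
so $(G_n)$ holds. Theorem~\ref{thm:n-gor-ring} then shows that $R$ is $n$-Gorenstein, and this holds for every $n$.

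For (1) $\Rightarrow$ (2), fix an indecomposable injective $I$ and split into two cases.
\begin{itemize}
\item If $\occ_R I=\infty$, the inequality $\fdim_R I\le \occ_R I$ is trivial under the paper's conventions.
\item If $\occ_R I=g<\infty$, apply $(G_{g+1})$, which holds by Theorem~\ref{thm:n-gor-ring} because $R$ is $(g+1)$-Gorenstein. This gives $g\ge \min\{g+1,\fdim_R I\}$. The minimum cannot equal $g+1>g$, so it equals $\fdim_R I$, and hence $\fdim_R I\le g=\occ_R I$.
\end{itemize}
One could also argue directly: $I$ is a summand of $I^g(R)$, and $\fdim_R I^g(R)\le g$ by $(g+1)$-Gorensteinness. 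This is the same computation as the forward direction of Theorem~\ref{thm:n-gor-ring}.

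There is no real obstacle. The only point needing care is the bookkeeping with $\infty$: in (1) $\Rightarrow$ (2) one must choose $n=g+1$, depending on $I$, rather than a fixed $n$, and handle $\occ_R I=\infty$ separately via the convention that $\infty$ dominates everything.
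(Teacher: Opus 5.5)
Your proposal is correct and follows the paper's approach: the paper simply says the equivalence follows from Theorem~\ref{thm:n-gor-ring}, and you have written out that deduction explicitly. In particular, choosing $n=g+1$ when $\occ_R I=g<\infty$ and treating $\occ_R I=\infty$ separately are exactly the details the paper leaves implicit.
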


\begin{proof}
The equivalence follows from Theorem \ref{thm:n-gor-ring}.
\end{proof}

We now focus our attention on Artin algebras.
Recall that an Artin algebra is an algebra 
that is  module finite over a commutative Artinian ring.
We refer to \cite{ARS95} for basic background  on Artin algebras.

Let $k$ be a commutative Artinian ring and $E$ be a minimal 
injective cogenerator of $k$-modules.
Denote by $D=\Hom_k(-,E)$ the standard duality.

Let $A$ be an Artin algebra over $k$.
Then every indecomposable injective $A$-module is 
isomorphic to the injective envelope 
$I(S)$ of a simple $A$-module $S$.
By Lemmas \ref{lem:simple-grade} and \ref{lem:grade-flat},
it follows that
\[
    \occ_AI(S)=\grade_AS \leq \pdim_AI(S).
\]

By Theorem \ref{thm:n-gor-ring}, 
we have the following criterion  characterizing the $n$-Gorenstein property 
for Artin algebras.

\begin{thm}  \label{thm:n-gor-algebra}
    Let  $A$ be an Artin algebra and $n \ge 0$.
    The following  are equivalent.
    \begin{enumerate}
    \item $A$ is $n$-Gorenstein;
    \item $\grade_AS\geq\min\{n,\pdim_AI(S)\}$ for each simple $A$-module $S$;
    \item $\pdim_AI(S)=\grade_AS$ for each simple $A$-module with $\grade_AS<n$.
\end{enumerate}  
\end{thm}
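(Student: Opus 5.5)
The plan is to obtain the theorem by specializing Theorem \ref{thm:n-gor-ring} to $R=A$. Two facts recorded just before the statement do the translation. First, for an Artin algebra every indecomposable injective module is isomorphic to $I(S)$ for a simple module $S$, and $I(S)\cong I(S')$ only when $S\cong S'$; so quantifying over indecomposable injectives is the same as quantifying over simples. Second, by Lemma \ref{lem:simple-grade}, $\occ_A I(S)=\grade_A S$.

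It remains to replace $\fdim$ by $\pdim$. An Artin algebra is two-sided Noetherian, hence Theorem \ref{thm:n-gor-ring} applies. The module $I(S)$ is finitely generated, being the dual of an indecomposable projective right module. Over a Noetherian ring, a finitely generated module has $\fdim=\pdim$, because finitely presented flat modules are projective. With this substitution, condition $(G_n)$ becomes exactly (2), which proves (1)$\iff$(2).

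For (2)$\iff$(3), I will use the inequality $\grade_A S\le \pdim_A I(S)$ displayed before the statement. This inequality comes from Lemma \ref{lem:grade-flat} with $X=S$, $M=I(S)$ and $n=0$, using $\Hom_A(S,I(S))\neq0$.

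\emph{Proof of (2)$\implies$(3).} Let $\grade_A S<n$. Then $\min\{n,\pdim_A I(S)\}\le\grade_A S<n$, so this minimum equals $\pdim_A I(S)$. This gives $\pdim_A I(S)\le\grade_A S$, and the reverse inequality yields equality.

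\emph{Proof of (3)$\implies$(2).} If $\grade_A S\ge n$, the inequality in (2) is immediate. Otherwise (3) gives $\grade_A S=\pdim_A I(S)\ge\min\{n,\pdim_A I(S)\}$.

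No step is a real obstacle. The only point needing a justification beyond citing earlier results is the identification $\fdim=\pdim$ for finitely generated modules over an Artin algebra.
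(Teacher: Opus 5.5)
Your proposal is correct and is essentially the paper's argument: the paper derives the theorem directly from Theorem~\ref{thm:n-gor-ring}, using that every indecomposable injective is some $I(S)$ and that $\occ_A I(S)=\grade_A S\le\pdim_A I(S)$ (Lemmas~\ref{lem:simple-grade} and~\ref{lem:grade-flat}). You additionally make explicit two points the paper leaves tacit: the identification $\fdim=\pdim$ for the finitely generated module $I(S)$, and the elementary manipulation of the minimum that gives (2)$\iff$(3).
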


By Proposition \ref{prop:all-gor-ring}, we have the following.

\begin{prop} \label{prop:all-gor-algebra}
    Let $A$ be an Artin algebra. 
    The following  are  equivalent.
    \begin{enumerate}
        \item $A$ is $n$-Gorenstein for all $n$;
        \item $\pdim_AI(S)=\grade_AS$ for every simple $A$-module $S$.
    \end{enumerate}
\end{prop}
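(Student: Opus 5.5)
The plan is to deduce Proposition~\ref{prop:all-gor-algebra} directly from Proposition~\ref{prop:all-gor-ring}, translated into the language of simple modules as was done for Theorem~\ref{thm:n-gor-algebra}. The translation has three ingredients. First, an Artin algebra $A$ is two-sided Noetherian, so Proposition~\ref{prop:all-gor-ring} applies. Second, every indecomposable injective $A$-module is isomorphic to $I(S)$ for a simple $A$-module $S$. Third, Lemma~\ref{lem:simple-grade} gives $\occ_A I(S)=\grade_A S$.

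Next, I would replace flat dimension by projective dimension. Since $A$ is Artinian, every indecomposable injective $I(S)$ is finitely generated; indeed $I(S)\cong D(Ae)$ for a suitable idempotent $e$, which has finite length. For finitely generated modules over a Noetherian ring, flat and projective dimensions coincide, because finitely presented flat modules are projective. Hence $\fdim_A I(S)=\pdim_A I(S)$.

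With these substitutions, condition (2) of Proposition~\ref{prop:all-gor-ring} becomes
\[
\pdim_A I(S)\le \grade_A S \quad\text{for every simple } A\text{-module } S.
\]
The reverse inequality $\grade_A S\le \pdim_A I(S)$ always holds, as recorded just before Theorem~\ref{thm:n-gor-algebra}. It also follows from Lemma~\ref{lem:grade-flat} with $n=0$, $X=S$ and $M=I(S)$, since $\Hom_A(S,I(S))\neq 0$; alternatively one may use Theorem~\ref{thm:occ-fdim}. So the inequality is equivalent to the equality $\pdim_A I(S)=\grade_A S$, which is exactly (2).

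There is a subtle point to check when $\grade_A S=\infty$. Under the paper's conventions, the inequality $\pdim_A I(S)\le\infty$ holds automatically, and the reverse inequality forces $\pdim_A I(S)=\infty$, so the equality still holds. I expect no real obstacle here: the only step needing care is the identification of flat and projective dimension for finitely generated injectives.

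Alternatively, one can argue degreewise. Condition (1) holds if and only if (2) of Theorem~\ref{thm:n-gor-algebra} holds for every $n$. Letting $n\to\infty$, the family of inequalities $\grade_A S\ge\min\{n,\pdim_A I(S)\}$ for all $n$ is equivalent to $\grade_A S\ge \pdim_A I(S)$. This gives the same conclusion.
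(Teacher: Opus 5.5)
Your proposal is correct and follows the paper's route: the paper deduces this statement directly from Proposition~\ref{prop:all-gor-ring}, using $\occ_A I(S)=\grade_A S\le \pdim_A I(S)$ and the fact that flat and projective dimension agree for the finitely generated injectives $I(S)$, exactly as you do. One minor slip: the indecomposable injective left module is $I(S)\cong D(eA)$, not $D(Ae)$, but this does not affect the argument.
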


Recall the following  result; see \cite[Corollary 5.5]{AR94}.

\begin{lem} \label{lem:gor}
    Let $A$ be an Artin algebra which  is $n$-Gorenstein for all $n$.
    Then the left selfinjective dimension of $A$ is finite if and only if 
    the right selfinjective dimension of $A$ is finite.
\end{lem}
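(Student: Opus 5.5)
Since this lemma is quoted from \cite[Corollary 5.5]{AR94}, the plan is to recover it from Proposition \ref{prop:all-gor-algebra} together with the left-right symmetry of Lemma \ref{lem:left-right}. By symmetry it suffices to prove one implication. So assume $d=\idim_AA<\infty$ for the left module $A$; the goal is to show that $\idim A_A=\pdim_A D(A_A)$ is finite. Every indecomposable summand of the left module $D(A_A)$ is of the form $I(S)$ for a simple $A$-module $S$. Hence it is enough to show $\pdim_AI(S)<\infty$ for every simple $S$, with a uniform bound.

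\emph{Step 1.} Let $S$ be a simple module of finite grade. By Lemma \ref{lem:simple-grade}, $\grade_AS=\occ_AI(S)$. Since $I^g(A)=0$ for $g>d$, this forces $\grade_AS\le d$. Proposition \ref{prop:all-gor-algebra} then gives
\[
\pdim_AI(S)=\grade_AS\le d .
\]
So every simple of finite grade contributes an injective of projective dimension at most $d$.

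\emph{Step 2, the main obstacle.} It remains to show that no simple $S$ has $\grade_AS=\infty$. Equivalently, every indecomposable injective occurs in $I^0(A)\oplus\cdots\oplus I^d(A)$. Note that $\pdim_AI(S)=\grade_AS$ holds even for infinite grade, so such an $S$ would have $\pdim_AI(S)=\infty$ and $\Ext^i_A(S,A)=0$ for all $i$.

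My first attempt is a counting argument via the Auslander–Reiten correspondence. Apply Proposition \ref{prop:all-gor-algebra} to $A^{\op}$, which is allowed by Lemma \ref{lem:left-right}. For a simple $S$ of finite grade $g$, I would use the Auslander condition ($\grade\Ext^i_A(M,A)\ge i$) to show that $\Ext^g_A(S,A)$ has a unique simple submodule, namely a right simple $T$ of grade $g$. I would then check that $S\mapsto T$ is injective, with inverse given by the same construction on the right.

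This yields a bijection between left and right simples of finite grade. The intended contradiction then comes from showing that the right simples of finite grade exhaust all right simples, or from a dual count using $d<\infty$.

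If the counting does not close, the fallback is to exploit $d<\infty$ directly. The module $A$ lies in the thick subcategory generated by $J=\bigoplus_{g\le d}I^g(A)$, and every summand of $J$ has finite projective dimension. I would apply the Nakayama functor $D\Hom_A(-,A)$ to a finite projective resolution of $J$, aiming to exhibit every indecomposable injective $D(P^*)$ inside a finite complex built from $\add J$. That would show every $I(S)$ appears among the $I^g(A)$, hence has finite grade.

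\emph{Step 3.} Once all simples have finite grade, Step 1 gives $\pdim_AD(A_A)\le d$, so $\idim A_A\le d<\infty$. The converse implication follows by applying the same argument to $A^{\op}$.
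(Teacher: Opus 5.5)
\textbf{There is a genuine gap: Step~2 is never proved, and it is the whole content of the lemma.}

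Your reduction is correct. We have $\idim A_A=\pdim_A D(A_A)=\max_S\pdim_A I(S)$, and Step~1 shows $\pdim_A I(S)=\grade_A S\le d$ whenever $\grade_A S<\infty$. So the lemma is equivalent to every simple left module having finite grade. That statement is the whole content of \cite[Corollary 5.5]{AR94}, which the paper quotes without proof. Your Step~2 offers only a plan, an unspecified ``intended contradiction'', and a fallback.

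The fallback is circular. Since $J$ has finite projective dimension and $A\in\operatorname{thick}(J)$, the category $\operatorname{thick}(J)$ is already all perfect complexes. Placing $D(A_A)=\nu A$ there is the same as $\pdim_A D(A_A)<\infty$, which is the conclusion itself. Applying $\nu$ to a projective resolution of $J$ produces complexes of arbitrary injectives, so it says nothing about which $I(S)$ are summands of $J$.

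The counting route can be made to work, but it needs two inputs you do not supply.

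The first input is easy and answers your ``exhaustion'' question. $D({}_AA)$ is a right module of projective dimension $\idim_A A=d$. Hence every indecomposable injective right module $I(T)$ has $\pdim_{A^{\op}}I(T)\le d$. Proposition~\ref{prop:all-gor-algebra} applied to $A^{\op}$ (legitimate by Lemma~\ref{lem:left-right}) then gives $\grade_{A^{\op}}T\le d$ for every simple right module $T$.

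The second input is the real obstacle: an injective map from simple right modules of finite grade to simple left modules of finite grade. This is essentially Iyama's grade bijection, closely related to the Auslander--Reiten bijection recalled in the introduction. It is a genuine theorem, not a one-line consequence of $\grade\Ext^i_A(M,A)\ge i$. That inequality bounds the grade of $\Ext^g_A(S,A)$ itself, not of a simple submodule $T$ of it; for that you need Auslander's form of the condition for all submodules. You would then still have to prove that $T$ is unique, that it has grade exactly $g$, and that $S\mapsto T$ is injective.

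Once both inputs are in place, the count closes immediately. Every right simple has finite grade, so every left simple does, and Step~3 gives $\idim A_A\le d$.
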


This gives a new proof of  \cite[Theorem 3.3]{KMT25}.
We note that Theorem \ref{thm:n-gor-algebra} 
can be regarded as a refinement of this result.

\begin{cor} \label{cor:aus-gor-algebra}
    Let $A$ be an Artin algebra. The following are equivalent.
    \begin{enumerate}
        \item $A$ is Auslander-Gorenstein;
        \item $\pdim_AI(S)=\grade_AS<\infty$ for every simple $A$-module $S$.
    \end{enumerate}
\end{cor}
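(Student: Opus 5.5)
Combining Proposition~\ref{prop:all-gor-algebra} with Lemma~\ref{lem:gor} should give both directions.

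For (1)$\Rightarrow$(2), suppose $A$ is Auslander-Gorenstein. Then $A$ is $n$-Gorenstein for every $n$, so Proposition~\ref{prop:all-gor-algebra} gives $\pdim_AI(S)=\grade_AS$ for every simple $A$-module $S$. It remains to see that this common value is finite. The left selfinjective dimension $d=\idim_AA$ is finite by assumption, so the minimal injective coresolution of $A$ has $I^i(A)=0$ for $i>d$. Hence $\Ext^i_A(S,A)\cong\Hom_A(S,I^i(A))$ vanishes for $i>d$, as in the proof of Lemma~\ref{lem:simple-grade}. Moreover, $\Ext^i_A(S,A)$ is nonzero for at least one $i\le d$. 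Indeed, $I(S)$ must occur in some $I^i(A)$, because every indecomposable injective over an Artin algebra occurs in the minimal injective coresolution of $A$ when $\idim_AA<\infty$. To see this, dualize: $DA$ has a finite projective resolution over $A^{\op}$ of length $d$, and this forces every indecomposable projective-dual to appear. If this route is awkward, one can instead use that $\pdim_AI(S)\le\idim_{A^{\op}}A<\infty$ for an Auslander-Gorenstein Artin algebra, since Gorenstein Artin algebras have injectives of finite projective dimension. Then $\grade_AS=\pdim_AI(S)<\infty$ follows directly from the equality. I would take this second route, as it is cleaner: for a Gorenstein Artin algebra, $\pdim_A DA=\idim A_A<\infty$.

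For (2)$\Rightarrow$(1), Proposition~\ref{prop:all-gor-algebra} shows that $A$ is $n$-Gorenstein for all $n$. Finiteness of $\pdim_AI(S)$ for all simples $S$ gives $\pdim_A DA<\infty$, since $DA$ is a finite direct sum of the $I(S)$. By duality, $\pdim_A DA=\idim A_A$ is the right selfinjective dimension. Lemma~\ref{lem:gor} then gives finiteness of the left selfinjective dimension too, so $A$ is Auslander-Gorenstein.

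The main obstacle is the finiteness claim in (1)$\Rightarrow$(2). The standard identity $\pdim_A DA=\idim A_A$, combined with the finiteness of the two-sided selfinjective dimensions, resolves it: $\pdim_AI(S)\le\pdim_A DA<\infty$.
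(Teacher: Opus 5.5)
Your proof is correct and is essentially the paper's argument: the corollary comes from combining Proposition~\ref{prop:all-gor-algebra} with Lemma~\ref{lem:gor}, with finiteness handled through $\idim_{A^{\op}}A=\pdim_A D(A_A)=\max_S\pdim_AI(S)$, the same identity the paper uses in the proof of Corollary~\ref{cor:selfinjdim}. The first finiteness route you sketched is not justified as written: the claim that $D(_AA)$ having a finite projective resolution forces every indecomposable projective to appear just restates what you want to prove. You were right to discard it in favour of the second route.
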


\section{The Auslander-Gorenstein property for Nakayama algebras}
\label{sec:aus-gor-nak}

This section is devoted to the study of the Auslander-Gorenstein 
property for Nakayama algebras.
We prove that the syzygy filtered algebra 
of every Auslander-Gorenstein Nakayama algebra is again Auslander-Gorenstein.

The results in this section provide the homological framework 
needed in the final section. 
In particular, the behavior of the Auslander-Gorenstein property 
under syzygy filtration 
will allow us to analyze the structure of Ext-modules of simple modules.

A Nakayama algebra is an Artin algebra 
whose indecomposable modules are all uniserial.
The quiver of a connected Nakayama algebra is either linear or cyclic.

Throughout Sections \ref{sec:aus-gor-nak} and \ref{sec:odd-ext}, 
we work with basic Nakayama algebras $A$ satisfying the following two properties:
\begin{enumerate}
  \item[(N1)] either the  quiver of $A$ is cyclic, 
  or every connected component of the quiver of  $A$ is linear;
  \item[(N2)] the endomorphism algebras of simple $A$-modules are mutually isomorphic.
\end{enumerate}

\begin{rmk}
Although \cite{She26} is stated for finite dimensional basic
algebras over an algebraically closed field, the proofs of
the results used below depend only on the uniseriality of
indecomposable modules and on the fact 
that the endomorphism rings of the simple modules are mutually
isomorphic. Hence, they remain valid for basic Artin
Nakayama algebras satisfying (N1) and (N2).
\end{rmk}

It is well known that a Nakayama algebra has dominant dimension at least $1$;
or equivalently, 
the injective envelope of its regular module is projective; see \cite{ARS95}. 
Thus, every Nakayama algebra is $1$-Gorenstein.

For any Nakayama algebra $A$, 
we  fix a cyclic translation $\tau$ on the complete set of simple $A$-modules. 
By construction,  $\tau$ agrees with the Auslander-Reiten translation 
for non-projective simple modules and 
sends each projective simple module to an injective simple module. 
Such a cyclic translation is uniquely determined by $A$, 
provided that the algebra $A$ is connected.

Let $e_1,e_2,\cdots,e_m$ be a complete set of 
pairwise orthogonal primitive idempotents in $A$.
For $1\leq i\leq m$, write $P(i)$ for $Ae_i$ and $S(i)$ for $Ae_i/\rad Ae_i$.
Then 
\[\scS=\{S(i)\mid 1\leq i\leq m\}\]
is a complete set of simple $A$-modules.
The simple $A$-modules ordered such that
\[
    \tau(S(i))=S(i+1) \text{ for all } 1\leq i<m \text{ and }  \tau S(m)=S(1).
\] 
For each index $i$, let $c_i$ denote the composition length of $P(i)$.
The tuple 
\[(c_1,c_2,\ldots,c_m)\]
is called a Kupisch series of $A$.
Note that the inequality
\[c_i\leq c_{i+1}+1\]
holds for every index $i$. 
We adopt the convention that $c_{m+1}=c_1$.

For $1\leq i\leq m$ and $1\leq \ell\leq c_i$, 
we write the interval 
\[[i,i+\ell)\] 
for the indecomposable module $P(i)/\rad^{\ell}P(i)$.

In \cite{Rin13}, 
Ringel introduced a map $\gamma$ on the set of simple $A$-modules, 
defined by
\[ 
    \gamma(S)=\tau\soc P(S).
\]

We will use the following result; see  \cite[A.6]{Rin21}.

\begin{lem}\label{lem:bijection}
Let $A$ be a Nakayama algebra and $S$ a simple $A$-module.
\begin{enumerate}
    \item $\pdim_AS\neq 1$ if and only if $P(\tau S)$ is injective;
    \item $\idim_AS\neq 1$ if and only if $I(\tau^{-1} S)$ is projective;
    \item $S$ lies in the image of $\gamma$ if and only if  $\idim_AS\neq 1$;  
    \item $\gamma$ restricts to a  bijection 
\[\{S\in\scS\mid \pdim_AS\neq 1\}\overset{\sim}\lto \{T\in\scS\mid \idim_AT\neq 1\}.\]
\end{enumerate}
\end{lem}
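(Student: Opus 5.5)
The lemma is cited from \cite[A.6]{Rin21}. Still, I would prove it directly from uniseriality and the Kupisch series, writing $c_i$ for the length of $P(i)$ and $[i,i+\ell)$ for $P(i)/\rad^\ell P(i)$. Throughout, $\soc P(i)=S(i+c_i-1)$, so $\gamma(S(i))=S(i+c_i)$, with indices taken modulo $m$ in the cyclic case. In the linear case I use the convention fixed by the cyclic translation $\tau$ from $S(m)$ to $S(1)$.

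For (1), I take $S=S(i)$. If $S$ is projective, then $\pdim S=0\neq1$. In that case $\tau S$ is injective simple, hence $P(\tau S)=\tau S$ is a simple projective-injective module; one checks this in the linear case, where $c_{i+1}$ must equal $1$ at the end of a component. Otherwise $\rad P(i)=[i+1,i+c_i)$ is a quotient of $P(i+1)$. So $\pdim S=1$ if and only if $\rad P(i)$ is projective, that is, $c_{i+1}=c_i-1$. On the other hand, $P(i+1)$ is injective exactly when it is not a proper submodule of another indecomposable projective. The only candidate is $P(i)$, via $\rad P(i)$, since $P(i+1)$ has top $S(i+1)$ and a uniserial module containing it properly must have top $S(i)$. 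Hence $P(i+1)$ is non-injective if and only if $c_i=c_{i+1}+1$, which is exactly the condition $\pdim S=1$.

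Statement (2) is the dual of (1), applied to $A^{\op}$ via $D$; it is again a Nakayama algebra satisfying (N1) and (N2), and $\tau^{-1}$ corresponds to $\tau$.

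For (3), note that $\gamma(S(i))=\tau\soc P(i)$ and $\soc P(i)=\soc I(\soc P(i))$. Since $A$ has dominant dimension at least $1$, the simple modules occurring as socles of projectives are exactly the socles of projective-injective modules. So $T=\tau U$ lies in the image of $\gamma$ if and only if $U=\tau^{-1}T$ is the socle of a projective module. This holds if and only if $U$ is the socle of some projective-injective module, that is, if and only if $I(\tau^{-1}T)$ is projective. By (2), this is equivalent to $\idim T\neq1$.

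For (4), the image statement is (3); the main point is to show that $\gamma$ is injective on the set $\{S\mid\pdim S\neq1\}$, and that this set maps onto the whole image. By (1), $\pdim S(i)\neq1$ means $P(i+1)$ is injective (or $S(i)$ is projective). Distinct projective-injective indecomposables have distinct socles, because an injective module is determined by its socle. So $S(i)\mapsto\soc P(i+1)$ is injective on this set. However, $\gamma$ uses $\soc P(i)$, not $\soc P(i+1)$, and this is the step I expect to be the main obstacle. To handle it, I would compare the two. If $P(i+1)$ is injective, then $c_i\le c_{i+1}$, so $\soc P(i)$ lies in the interval ending at $\soc P(i+1)$. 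I would then show that the map $i\mapsto i+c_i$ restricted to $\{i\mid P(i+1)\text{ injective}\}$ is strictly monotone in the cyclic order, with no collisions. This follows because $i+c_i$ is weakly increasing in $i$ (from $c_i\le c_{i+1}+1$), and strictly increasing precisely at steps where $c_{i+1}\ge c_i$. Surjectivity onto the image of $\gamma$ then follows: every $\gamma(S(j))$ with $\pdim S(j)=1$ equals $\gamma(S(j+1))$, since $j+c_j=j+1+c_{j+1}$. Iterating this along a maximal run with $c_{j+1}=c_j-1$ reaches an index with $\pdim\neq1$. Termination requires a run that does not wrap around forever, which holds because such a run forces $c$ to decrease.
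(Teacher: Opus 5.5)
\paragraph{A false step in the projective case of (1).} When $S=S(i)$ is projective, you claim that $P(\tau S)=\tau S$ is a simple projective-injective module and that $c_{i+1}=1$. This is false: it confuses $P(\tau S)$ with $I(\tau S)$, which does equal $\tau S$. Take the Kupisch series $(3,3,2,1)$ of Example~\ref{exam:4-simple}. There $S(4)$ is projective and $\tau S(4)=S(1)$, but $P(1)=[1,4)$ has length $3$, and $c_1=3\neq 1$.

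The conclusion you need, that $P(\tau S)$ is injective, is still true. Your own criterion gives it in both cases at once, once it is phrased precisely:
\begin{itemize}
\item Every indecomposable is $[j,j+\ell)$ with $\ell\le c_j$, and its radical is $[j+1,j+\ell)$.
\item A uniserial module properly containing $P(i+1)$ contains a submodule whose radical is $P(i+1)$.
\item Hence $P(i+1)$ is non-injective if and only if $[i,i+c_{i+1}+1)$ exists, i.e.\ $c_i\ge c_{i+1}+1$, i.e.\ $c_i=c_{i+1}+1$.
\end{itemize}
This covers $c_i=1$, where $i+1$ is a source and no such module exists, as well as $c_i\ge 2$. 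Your phrase ``a uniserial module containing it properly must have top $S(i)$'' should read ``a uniserial module with radical $P(i+1)$''; as written it is also inaccurate in the cyclic case.

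\paragraph{Comparison with the paper.} With this repair your proposal is correct. It takes a genuinely different route from the paper, which gives no argument and simply cites \cite[A.6]{Rin21}. Yours is a self-contained Kupisch-series computation:
\begin{itemize}
\item for (1), $\pdim S(i)\neq 1\iff c_i\le c_{i+1}\iff P(i+1)$ injective;
\item for (2), duality;
\item for (3), dominant dimension at least one;
\item for (4), monotonicity of $f(i)=i+c_i$.
\end{itemize}

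In (4) the detour through $\soc P(i+1)$ is unnecessary. What makes ``no collisions'' rigorous is to lift indices to $\mathbb{Z}$ and note that $f(i+m)=f(i)+m$. Then for $i<j<i+m$ with $\pdim S(i)\neq1\neq\pdim S(j)$ you get $f(i)<f(i+1)\le f(j)<f(j+1)\le f(i+m)=f(i)+m$, so the residues are distinct.

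The citation buys brevity. Your route buys transparency: it shows that only uniseriality, the inequality $c_i\le c_{i+1}+1$, and dominant dimension at least one are used. This supports using the lemma for the basic Artin Nakayama algebras satisfying (N1) and (N2) adopted in Section~\ref{sec:aus-gor-nak}. It also yields explicit formulas, $\gamma S(i)=S(i+c_i)$ and $\pdim S(i)\neq1\iff c_i\le c_{i+1}$, which make the paper's examples mechanical to check.
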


Let $A$ be a Nakayama algebra. 
Fix a simple $A$-module  $S$ with $\pdim_AS\neq 1$.
Lemma \ref{lem:bijection} implies that 
the injective dimension of $\gamma S$ is different from $1$.
By \cite{Rin21,Sen19,She26}, 
the base module $\nabla(S)$ is defined by 
the exact sequence of $A$-modules
\[
    0\to \nabla(S)\to P(\tau S)\to P(S)\to S\to 0.
\]
By Lemma \ref{lem:bijection}, 
the module $P(\tau S)$ is  injective. 
The top  of $\nabla(S)$ is isomorphic to $\gamma S$
and the injective dimension of $\gamma S$ is different from $1$.
The base module $\nabla(S)$ is the minimal submodule of $P(\tau{S})$ 
whose top  has injective dimension different from $1$.

The set of all base modules
\[
    \scB=\{\nabla(S)\mid \pdim_AS\neq 1\}
\]
forms a semibrick in the category $\mod A$ of all finitely generated $A$-modules.
The filtration category $\scE$ of $\scB$ is 
an exact abelian subcategory of $\mod A$ whose embedding preserves projective covers.
Moreover, $\scE$ contains the second  syzygy 
in a minimal projective resolution of any finitely generated $A$-module.

Denote by $\scP$ the set of projective covers of all base modules.
The additive closure  of $\scP$ coincides 
exactly with the projective objects of $\scE $.

\begin{lem}\label{lem:ext-comparison}
For all $X,Y\in\scE$ and all $n\geq 0$, 
there is a natural isomorphism
\[
  \Ext^n_{\scE}(X,Y)\cong \Ext^n_A(X,Y).
\]
Moreover, $\pdim_\scE X=\pdim_A X$ for all $X\in\scE$.
\end{lem}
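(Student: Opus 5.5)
The plan is to compare a minimal projective resolution in $\scE$ with one in $\mod A$. We show they coincide, and then use the fact that $\scE$ is an extension-closed exact abelian subcategory.

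First, fix $X\in\scE$ and take a minimal projective resolution
\[
\cdots\lto P_1\lto P_0\lto X\lto 0
\]
in $\scE$, with each $P_j\in\add\scP$. Since the embedding $\scE\subseteq\mod A$ preserves projective covers, the projective cover $P_0\to X$ in $\scE$ is also the projective cover in $\mod A$. Here we use that each object of $\scP$ is an $A$-projective module, being the projective cover of a base module. The kernel $\Omega X$ is the kernel in $\scE$. Because $\scE$ is an exact abelian subcategory, kernels computed in $\scE$ agree with kernels in $\mod A$. Hence $\Omega X\in\scE$ is the $A$-syzygy of $X$. Inducting on the degree, the $\scE$-resolution is a minimal projective resolution of $X$ in $\mod A$. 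The second claim, $\pdim_\scE X=\pdim_A X$, follows at once, since both dimensions are the length of the same minimal resolution.

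For the Ext isomorphism, apply $\Hom(-,Y)$ to this common resolution. Since $\scE$ is a full subcategory, $\Hom_\scE(P_j,Y)=\Hom_A(P_j,Y)$, so the two cochain complexes are literally equal. Their cohomologies are therefore equal, and this gives $\Ext^n_\scE(X,Y)\cong\Ext^n_A(X,Y)$. This uses that $\Ext_\scE$ is computed by projective resolutions in $\scE$, which is valid because $\scE$ has enough projectives: every object is covered by an object of $\add\scP$. Naturality in $X$ and $Y$ follows by lifting morphisms to the resolutions, which is compatible with both sides. Alternatively, one can compare Yoneda Ext, using that $\scE$ is closed under extensions in $\mod A$.

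The main obstacle is to justify that projective objects of $\scE$ are projective $A$-modules, and that $\scE$-kernels of epimorphisms between objects of $\scE$ are $A$-kernels. Both are packaged in the statements recalled from \cite{She26}: the additive closure of $\scP$ is the class of projectives of $\scE$, and the embedding is exact and preserves projective covers. If needed, I would first check that the kernel of $P(\nabla)\to\nabla$ is filtered by base modules. This reduces to the semibrick and uniserial structure, where the kernel is a submodule of $P(\tau S)$ whose top has injective dimension $\neq 1$.
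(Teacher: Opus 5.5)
Your proposal is correct and takes essentially the same route as the paper. Both arguments use that the projective objects of $\scE$ (the modules in $\add A_\varepsilon$) are projective $A$-modules and that the exact embedding preserves projective covers, so the minimal projective resolution of $X$ in $\scE$ is its minimal projective resolution in $\mod A$; fullness of $\scE$ then makes the two $\Hom(-,Y)$ complexes identical. The only difference is that the paper computes Ext from an arbitrary $\scE$-projective resolution and uses minimal ones only for the dimension.

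Your closing aside is not needed, since the paper also takes these facts from \cite{She26}. As phrased it is also slightly off: the kernel of $P(\gamma S)\to\nabla(S)$ is $\Omega_A\nabla(S)=\Omega_A^2(\Omega_A S)$, a submodule of $P(\gamma S)$ rather than of $P(\tau S)$, and it lies in $\scE$ simply because $\scE$ contains all second syzygies.
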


\begin{proof}
The filtration category $\scE$ has enough projective objects, 
and its projective objects are precisely
the modules in $\add A_\varepsilon$. 
In particular, every projective object of 
$\scE$ is projective as an $A$-module. 
Hence, every projective resolution of 
$X$ in $\scE$ is also a projective resolution in $\mod A$.
Since $\scE$ is a full subcategory
of $\mod A$, applying $\Hom(-,Y)$ to this resolution computes 
both Ext modules and gives the asserted natural isomorphism.

Since $\scE$ is a full exact abelian subcategory of
$\mod A$, and the embedding preserves projective
covers, the kernel of the projective cover of an object of
$\scE$ is computed in $\mod A$ and again
belongs to $\scE$. Inductively, the minimal projective
resolution in $\scE$ agrees with the corresponding
minimal projective resolution in $\mod A$.
Then $\pdim_\scE X$ and $\pdim_A X$ are equal for all $X$ in $\scE$.
\end{proof}

Denote by $A_\varepsilon$ the direct sum of all modules in $\scP$.
The syzygy filtered algebra $\varepsilon(A)$ of $A$ is 
the opposite endomorphism algebra of $A_\varepsilon$, that is, 
\[
    \varepsilon(A)=\End_A^{\op}(A_\varepsilon)=
    \End_A^{\op}(\bigoplus\nolimits_{P\in\scP} P).
\]
It follows that the filtration category $\scE $ is equivalent to the category 
$\mod\varepsilon(A)$ of all finitely generated $\varepsilon(A)$-modules.
The algebra $\varepsilon(A)$ is a Nakayama algebra 
of rank  at most the rank of $A$.
The equality holds if and only if $A$ is selfinjective.

The construction of base modules yields a bijection 
from the simple $A$-modules with projective dimension 
different from $1$ 
onto  the  simple objects of $\scE$. 
This induces a cyclic translation $\tau_\scE $ 
on the set of simple objects of $\scE$ such that
\[
\tau_\scE \nabla(S)=\nabla(S')\iff \gamma\tau (S)=\gamma(S').
\]
This  gives rise to a reduction
\[(A,\tau)\to (\varepsilon(A),\tau_\scE).\]
The Kupisch series of $\varepsilon(A)$ 
can be explicitly computed from that of $A$.

Syzygy filtration provides a useful reduction technique 
for studying Nakayama algebras. 
It interacts particularly well with several homological dimensions, 
including global dimension, self-injective dimension, finitistic dimension, 
$\varphi$-dimension, and dominant dimension. 
We refer the reader to \cite{Sen19} for further details.

\begin{exam}
Let $A$ be a Nakayama algebra with Kupisch series
\[ 
(3,3,3,3,3,2,1). 
\]
It admits five base modules as follows
\[
\nabla(1)=[4,5),\;\nabla(2)=[5,6),\;\nabla(3)=[6,7),\;
\nabla(4)=[7,8),\;\nabla(7)=[1,4).
\]
A direct computation shows that the Kupisch series of $\varepsilon(A)$ is given by
\[(1,3,3,2,1).\]
Note that $\varepsilon(A)$ is a direct sum of two linear Nakayama algebras.
\end{exam}

We need the following; see \cite{Rin21}.

\begin{lem} \label{lem:madsen}
Let $M$ be an indecomposable $A$-module.
\begin{enumerate}
    \item $\pdim_AM=1$ if and only if 
    every composition factor of $M$ has projective dimension $1$;
    \item $\idim_AM=1$ if and only if 
    every composition factor of $M$ has injective dimension $1$.
\end{enumerate}
\end{lem}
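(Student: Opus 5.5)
The plan is to reduce both statements to an explicit computation with the Kupisch series. Part (2) will then follow from part (1) by duality.

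\textbf{Setting up.} An indecomposable $A$-module $M$ is uniserial. Since the statement is componentwise, I may assume $A$ is connected and write $M=[i,i+\ell)=P(i)/\rad^{\ell}P(i)$ with $1\le \ell\le c_i$. The convention $\tau S(j)=S(j+1)$ means $\rad^{t}P(j)/\rad^{t+1}P(j)\cong S(j+t)$. Hence the composition factors of $M$ are $S(i),S(i+1),\ldots,S(i+\ell-1)$, with indices taken modulo $m$ in the cyclic case. The projective cover of $M$ is $P(i)\to M$, with kernel $\rad^{\ell}P(i)$. This kernel is zero when $\ell=c_i$; when $\ell<c_i$ it is the uniserial module $[i+\ell,i+c_i)$ of length $c_i-\ell$ with top $S(i+\ell)$.

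\textbf{Key criterion.} I claim that $\pdim_A M=1$ if and only if $\ell<c_i$ and $c_{i+\ell}=c_i-\ell$. Indeed, $\pdim_A M=1$ means $M$ is not projective, so $\ell<c_i$, and the kernel is projective. The kernel is a quotient of $P(i+\ell)$ of length $c_i-\ell$, so it is projective exactly when this length equals $c_{i+\ell}$. Specialising to $\ell=1$ gives the simple case: $\pdim_A S(j)=1$ if and only if $c_j\ge 2$ and $c_{j+1}=c_j-1$.

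\textbf{Telescoping.} Iterating the Kupisch inequality $c_j\le c_{j+1}+1$ from $j=i$ to $j=i+\ell-1$ gives
\[
c_{i+\ell}\ \ge\ c_i-\ell .
\]
Equality holds if and only if $c_{j+1}=c_j-1$ for every $i\le j\le i+\ell-1$.

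\textbf{Proof of part (1).} Suppose $\pdim_A M=1$. Then the equality $c_{i+\ell}=c_i-\ell$ holds, so every step in the telescope is an equality. Moreover, for each $j$ in the range, $c_j=c_i-(j-i)\ge c_i-\ell+1\ge 2$. Hence each composition factor $S(j)$ has projective dimension $1$.

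Conversely, suppose every composition factor $S(j)$, for $i\le j\le i+\ell-1$, has projective dimension $1$. Then all the step equalities hold, which gives $c_{i+\ell}=c_i-\ell$. It remains to check $\ell<c_i$. If $\ell=c_i$, the equality would force $c_{i+\ell}=0$, which is impossible since $c_{i+\ell}\ge 1$. So $\ell<c_i$, and the criterion gives $\pdim_A M=1$.

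\textbf{Part (2).} Apply part (1) to the Nakayama algebra $A^{\op}$ via the duality $D$. This duality exchanges injective and projective dimensions, preserves the composition factors up to duality, and preserves indecomposability.

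\textbf{Main obstacle.} The only delicate point is bookkeeping. First, the orientation convention $\tau S(j)=S(j+1)$ must match the radical layers of $P(j)$. Second, the cyclic case needs care when $\ell$ exceeds $m$, since composition factors then repeat. The telescoping argument is unaffected by repetition, because it only uses the inequality at each index $j$ read modulo $m$.
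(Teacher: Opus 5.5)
Your proof is correct. It necessarily takes a different route from the paper, which gives no argument for this lemma and simply quotes it from Ringel's work as a black box.

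You derive the result directly from the Kupisch series. The two ingredients are:
\begin{enumerate}
\item the criterion that $[i,i+\ell)$ has projective dimension $1$ exactly when $\ell<c_i$ and $c_{i+\ell}=c_i-\ell$;
\item the telescoped inequality $c_{i+\ell}\ge c_i-\ell$, whose equality case is precisely the conjunction, over the composition factors $S(j)$, of the conditions $c_j\ge 2$ and $c_{j+1}=c_j-1$ that characterize $\pdim_A S(j)=1$.
\end{enumerate}
Part (2) then follows by applying part (1) to the Nakayama algebra $A^{\op}$ via $D$.

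The citation keeps the paper short and groups this fact with the other results from Ringel's appendix that are used alongside it, such as the properties of $\gamma$. Your route instead buys self-containment and an explicit numerical test, which is convenient for checking Kupisch-series examples like those in the paper. Moreover, your argument uses only uniseriality and composition lengths. It therefore visibly holds in the paper's working generality of basic Artin Nakayama algebras satisfying (N1) and (N2), without having to check the hypotheses of the cited source.

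The bookkeeping points you flag are handled correctly. Over a Nakayama algebra, a non-projective simple $S$ satisfies $\tau S\cong\top\rad P(S)$, and this is exactly what aligns the ordering $\tau S(j)=S(j+1)$ with the radical layers of $P(j)$. Repeated composition factors in the cyclic case cause no trouble, since the telescoping is applied index by index.
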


Let $S$ be a  simple module with $\pdim_AS \neq 1$.
By Lemma \ref{lem:madsen}, $\pdim_AI(S)\neq 1$.
There exists a commutative diagram with exact rows as depicted below.
\begin{equation} \label{eq:comdiag}
\begin{tikzcd}
0\arrow[r] &\nabla(S)\arrow[r,]\arrow[d,] &P(\tau{S})\arrow[r]\arrow[d,"="] 
&P(S) \arrow[r]\arrow[d] & S \arrow[r]\arrow[d] &0\\
0\arrow[r] &J(S) \arrow[r] &P(\tau{S})\arrow[r] &P(I(S)) \arrow[r] &I(S)\arrow[r] &0
\end{tikzcd}
\end{equation}

The module $J(S)$ is the maximal submodule of $P(\tau{S})$ 
whose top has injective dimension different from $1$.
If $I(S)$ is a projective module, then $J(S)$ is a projective and injective $A$-module. 
If $\pdim_A I(S)\geq2$, then $J(S)$ is a non-injective $A$-module.

By \cite[Lemma 3.3]{She26}, the module $J(S)$ is the
injective envelope of $\nabla(S)$ in the filtration category $\scE$.
Moreover, \[
    \scJ=\{J(S)\mid \pdim_AS\neq 1\}
\]
a complete set of indecomposable injective objects of $\scE$.

\begin{prop}\label{prop:injdim-J}
Let $A$ be a Nakayama algebra and $S$ be a  simple module 
such that $\pdim_AS\neq 1$. 
Then
\[
    \pdim_{\scE}J(S)=\max\{\pdim_AI(S)-2,0\}.
\]
\end{prop}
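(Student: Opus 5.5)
The plan is to read off a projective resolution of $J(S)$ from the bottom row of \eqref{eq:comdiag}, and then use Lemma~\ref{lem:ext-comparison} to replace $\pdim_{\scE}J(S)$ by $\pdim_AJ(S)$. Since $J(S)\in\scE$, that lemma gives
\[
\pdim_\scE J(S)=\pdim_AJ(S),
\]
so it suffices to compute $\pdim_AJ(S)$ in terms of $\pdim_AI(S)$.

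First I will check that the bottom row of \eqref{eq:comdiag} is the beginning of a minimal projective resolution of $I(S)$. The map $P(I(S))\to I(S)$ is a projective cover. Its kernel $\Omega I(S)$ is a uniserial module with top $\tau S$ (the top of the radical of the uniserial projective $P(I(S))$), so its projective cover is $P(\tau S)$. Hence $J(S)=\Omega^2 I(S)$ is the second syzygy.

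Next I split into cases by $d=\pdim_AI(S)$, using Lemma~\ref{lem:madsen} to rule out $d=1$ since $\pdim_AS\neq 1$.
\begin{itemize}
\item[(i)] If $d=0$, then $I(S)$ is projective. As remarked after \eqref{eq:comdiag}, $J(S)$ is then projective-injective, so $\pdim_\scE J(S)=0$, which equals $\max\{d-2,0\}$.
\item[(ii)] If $d\geq 2$, possibly $d=\infty$, then minimality of the resolution gives $\pdim_A\Omega^2 I(S)=d-2$ whenever $\Omega^2I(S)\neq 0$; this also covers $d=\infty$. Here $J(S)\neq 0$ because it contains $\nabla(S)\neq 0$, and indeed $d\geq 2$ forces $\Omega^2 I(S)\neq 0$. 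Thus $\pdim_AJ(S)=d-2$, and the formula follows.
\end{itemize}

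The main obstacle is making sure the bottom row really is minimal, that is, that $P(\tau S)\to P(I(S))$ is a projective cover of its image, so that $J(S)$ is exactly $\Omega^2I(S)$ and not $\Omega^2 I(S)$ plus a projective summand. I would argue this by uniseriality. $P(I(S))$ is uniserial with top equal to the top of $I(S)$, so $P(I(S))\to I(S)$ is a projective cover. The kernel is a nonzero proper submodule when $I(S)$ is not projective; it is uniserial, so its projective cover is indecomposable. Its top is $\tau S$, because $\soc I(S)=S$ and $I(S)$ is a quotient of $P(I(S))$, so the kernel starts just below $S$ in the uniserial series. This identifies the middle term as $P(\tau S)$, consistent with the diagram.

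The one additional point is $d=0$ in (i), where the kernel of $P(I(S))\to I(S)$ is zero. This appears to conflict with the diagram having $P(\tau S)$ in the middle, so that case rests on the paper's prior remark that $J(S)$ is projective-injective rather than on the diagram.
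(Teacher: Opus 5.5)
Your proposal is correct and is essentially the paper's proof: rule out $\pdim_AI(S)=1$ by Lemma~\ref{lem:madsen}, read off $\pdim_AJ(S)$ from the bottom row of \eqref{eq:comdiag}, and pass to $\scE$ via Lemma~\ref{lem:ext-comparison}. One small correction: there is no conflict with the diagram when $I(S)$ is projective, because exactness of the bottom row then forces $P(\tau S)\to P(I(S))$ to be zero and $J(S)=P(\tau S)$; more generally, dimension shifting along that exact row already gives $\pdim_AJ(S)=\max\{\pdim_AI(S)-2,0\}$ for the nonzero module $J(S)$, so your minimality check is harmless but unnecessary.
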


\begin{proof}
By Lemma \ref{lem:madsen},  $\pdim_AI(S)\neq 1$.
If $I(S)$ is projective, then  
$J(S)$ is  projective.
If $\pdim_AI(S)\ge 2$,  it follows from \eqref{eq:comdiag} that 
\[
    \pdim_{A}J(S)=\pdim_AI(S)-2.
\]
By Lemma \ref{lem:ext-comparison}, 
$\pdim_\scE J(S)=\pdim_AJ(S)$.
Therefore, the conclusion follows.
\end{proof}

Now we study the Auslander-Gorenstein property for Nakayama algebras.
Let us consider a minimal injective copresentation of $A$ as follows
\[
        0\to A\to I^0(A)\to I^1(A).
\]
Let $S$ be a simple $A$-module with $\pdim _AS=1$. 
By Lemma \ref{lem:bijection}, the sequence
\[
0\to P(\tau S)\to I(P(\tau S))\to I(S)
\]
is a minimal injective copresentation of $P(\tau S)$.
Since $A$ is a basic algebra, there is an isomorphism
\begin{equation} \label{eq:I1}
I^1(A)\cong \bigoplus_{\substack{S\in\scS\\ \pdim_AS= 1}}I(S).
\end{equation}

The following special property of Nakayama algebras will be useful.

\begin{lem} \label{lem:2-gor}
Let $A$ be a Nakayama algebra. The following are equivalent.
\begin{enumerate}
    \item $A$ is $2$-Gorenstein;
    \item $A$ is $3$-Gorenstein;
    \item $\pdim_AI(S)\le 1$ for every simple module $S$ with $\pdim_AS=1$;
    \item $\idim_AP(T)\le 1$ for every simple module $T$ with $\idim_AT=1$.
\end{enumerate}
\end{lem}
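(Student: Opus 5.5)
The plan is to prove (1)$\iff$(3) directly from the shape of $I^0(A)$ and $I^1(A)$, then obtain (1)$\iff$(4) by duality, and finally upgrade (1) to (2) by computing $I^2(A)$ explicitly. The implication (2)$\Rightarrow$(1) is trivial from the definition.

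\emph{Step 1: (1)$\iff$(3).} Every Nakayama algebra has dominant dimension at least $1$, so $I^0(A)$ is projective and $\pdim_A I^0(A)=0$ automatically. Hence $A$ is $2$-Gorenstein if and only if $\pdim_A I^1(A)\le 1$. By \eqref{eq:I1}, $I^1(A)$ is the direct sum of the modules $I(S)$ with $\pdim_A S=1$. So $A$ is $2$-Gorenstein exactly when $\pdim_AI(S)\le 1$ for all such $S$, which is (3).

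\emph{Step 2: (1)$\iff$(4).} The duality $D$ identifies simple $A$-modules $T$ with $\idim_AT=1$ with simple right modules $DT$ with $\pdim_{A^{\op}}DT=1$. Under this identification $D P(T)$ is the injective envelope of $DT$, and $\pdim_{A^{\op}} DP(T)=\idim_A P(T)$. Thus (4) for $A$ is precisely (3) for the Nakayama algebra $A^{\op}$. By Step 1 applied to $A^{\op}$, (4) is equivalent to $A^{\op}$ being $2$-Gorenstein. Lemma~\ref{lem:left-right} then gives (1)$\iff$(4).

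\emph{Step 3: (1)$\Rightarrow$(2).} This is the main step. Assume (1), equivalently (3); we must show $\pdim_A I^2(A)\le 2$. We compute the cosyzygies of the indecomposable non-injective summands of $A$.

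By Lemma~\ref{lem:bijection}, these summands are the modules $P=P(\tau S)$ with $\pdim_AS=1$, and they have minimal copresentation
\[
0\to P\to I(P)\to I(S).
\]
Put $\Omega^{-1}P=I(P)/P$. Since $I(P)$ is projective, $\pdim_A\Omega^{-1}P\le 1$. The next cosyzygy $\Omega^{-2}P=I(S)/\Omega^{-1}P$ fits into a short exact sequence
\[
0\to\Omega^{-1}P\to I(S)\to\Omega^{-2}P\to 0.
\]
By (3), $\pdim_A I(S)\le1$, so $\pdim_A\Omega^{-2}P\le 2$.

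The key observation, which I expect to carry the argument, is this: over a Nakayama algebra, every nonzero quotient of an indecomposable injective module is again injective. The reason is that a quotient of the uniserial module $I(S)$ is uniserial with top equal to the top of $I(S)$, and it is therefore of the form $D(e_jA/\rad^{\ell})$-dual-injective. This needs a short check using the Kupisch series and the inequality $c_i\le c_{i+1}+1$ on the injective side.

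Granting the observation, each $\Omega^{-2}P$ is either zero or itself indecomposable injective. Hence $I^2(A)\cong\bigoplus\Omega^{-2}P$, and this module has projective dimension at most $2$. Therefore $A$ is $3$-Gorenstein, which closes the cycle of implications.
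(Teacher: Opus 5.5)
Your Steps 1 and 2 match the paper's proofs of (1)$\iff$(3) and (1)$\iff$(4), and (2)$\Rightarrow$(1) is indeed trivial. The gap is in Step 3: the ``key observation'' that every nonzero quotient of an indecomposable injective module over a Nakayama algebra is injective is false. If it held, then for each indecomposable $M$ the module $I(M)/M$ would be injective, so $\idim_AM\le 1$ for all $M$ and $A$ would be hereditary. Your justification also points the wrong way: whether a uniserial module is injective is decided by its socle (it must be the longest module with that socle), not by its top.

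The claim fails exactly where you need it. Take the Kupisch series $(2,2,2,1)$. The only simple module of projective dimension $1$ is $S(3)$, and $I(S(3))=[2,4)=P(2)$, so $A$ is $2$-Gorenstein. But for $P=P(4)$ you get $\Omega^{-1}P=S(3)$ and $\Omega^{-2}P=[2,4)/S(3)=S(2)$, which is not injective because $I(S(2))=[1,3)$. The failure is structural: under (1) your claim would give $\idim_AA\le 2$ for every $2$-Gorenstein Nakayama algebra, making it $n$-Gorenstein for all $n$. Yet the paper's algebra $(3,3,3,3,3,2,1)$ is $2$-Gorenstein but not $4$-Gorenstein. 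So in general $I^2(A)=\bigoplus_P I(\Omega^{-2}P)$, and the bound $\pdim_A\Omega^{-2}P\le 2$ says nothing about $\pdim_AI(\Omega^{-2}P)$. In the example, $\pdim_AS(2)=2$ while $I(S(2))$ is projective.

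The paper does not prove (1)$\Rightarrow$(2) by hand; it cites \cite[2.10]{FI93}. If you want a self-contained argument in your spirit, bound $\pdim_AI(\Omega^{-2}P)$ directly. The tool is that $e(i)=i+c_i$ is weakly increasing, which is just $c_i\le c_{i+1}+1$.
\begin{itemize}
\item Write $S=S(s)$ and $P=P(s+1)$; then $\pdim_AS=1$ means $e(s+1)=e(s)$.
\item $I(P)=[x,e(s))$ with $x$ minimal such that $e(x)\ge e(s)$. Monotonicity gives $e(x)=e(s)$, so $I(P)=P(x)$. Also $I(S)=[y,s+1)$ with $y\le x$. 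If $y=x$, then $\Omega^{-2}P=0$.
\item If $y<x$, minimality gives $e(y)<e(s)=e(s+1)$. So a syzygy $[s+1,e(y))$ of $I(S)$ would be a proper quotient of $P(s+1)$, hence not projective. Therefore (3) forces $I(S)=P(y)$, i.e.\ $e(y)=s+1$.
\item Now $\Omega^{-2}P=[y,x)$ and $I(\Omega^{-2}P)=[z,x)$ with $z\le y$, so $x\le e(z)\le e(y)=s+1$.
\item Either $e(z)=x$ and $[z,x)=P(z)$, or $\Omega^2[z,x)=[e(z),e(s))$. The latter is projective, since $e(s)=e(x)\le e(e(z))\le e(s+1)=e(s)$.
\end{itemize}
Hence $\pdim_AI^2(A)\le 2$, which is what Step 3 needed.
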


\begin{proof}
    (1) $\iff$ (2)
    This is a consequence of \cite[2.10]{FI93}.

    (1) $\implies$ (3)
    Let $S$ be a simple module with $\pdim_AS=1$.
    By \eqref{eq:I1}, the module $I(S)$ 
    is a direct summand of $I^1(A)$.
    Since $A$ is $2$-Gorenstein, $\pdim_AI(S)\le 1$.

    (3) $\implies$ (1)
    Let $S$ be a simple module with $\pdim_AS=1$.
    Then $\pdim_AI(S)\le 1$.
    By \eqref{eq:I1}, we have $\pdim_AI^1(A)\leq 1$.
    It follows that $A$ is $2$-Gorenstein.

    (1) $\iff$ (4) 
    This follows by applying the equivalence of (1) and (3) to $A^{\op}$
    and using Lemma~\ref{lem:left-right}.
\end{proof}

By Theorem \ref{thm:n-gor-algebra}, we have the following.

\begin{cor} \label{cor:s3}
If $A$ is a $2$-Gorenstein Nakayama algebra, then 
    \[
        \pdim_AI(S)=\grade_AS
    \]
for every simple module $S$ such that $\grade_AS<3$.
\end{cor}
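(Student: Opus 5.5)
The corollary follows by combining Lemma~\ref{lem:2-gor} with Theorem~\ref{thm:n-gor-algebra}, taking $n=3$.

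First, let $A$ be a $2$-Gorenstein Nakayama algebra. Lemma~\ref{lem:2-gor}, through the equivalence (1)$\iff$(2), tells us that $A$ is in fact $3$-Gorenstein. This upgrade is the one step that needs an external input, namely \cite[2.10]{FI93}, and that input is already packaged in Lemma~\ref{lem:2-gor}. Without it, the general criterion would only give the conclusion for simple modules with $\grade_AS<2$.

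Next, we apply Theorem~\ref{thm:n-gor-algebra} with $n=3$. Since $A$ is $3$-Gorenstein, the equivalence (1)$\iff$(3) there gives
\[
\pdim_AI(S)=\grade_AS
\]
for every simple $A$-module $S$ with $\grade_AS<3$, which is the assertion.

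For completeness, here is why condition (2) of Theorem~\ref{thm:n-gor-algebra} yields condition (3). Suppose $\grade_AS<3$. The inequality $\grade_AS\geq\min\{3,\pdim_AI(S)\}$ then forces the minimum to be $\pdim_AI(S)$, so $\pdim_AI(S)\le\grade_AS$. The reverse inequality $\grade_AS=\occ_AI(S)\leq\pdim_AI(S)$ holds by Lemmas~\ref{lem:simple-grade} and~\ref{lem:grade-flat}, giving equality.
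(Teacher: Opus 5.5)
Your proof is correct and takes the same approach as the paper, which applies Theorem~\ref{thm:n-gor-algebra} with $n=3$ after using Lemma~\ref{lem:2-gor} to upgrade $2$-Gorenstein to $3$-Gorenstein. Your extra check that condition (2) implies condition (3), using $\grade_AS=\occ_AI(S)\le\pdim_AI(S)$, is also sound.
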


By Proposition \ref{prop:injdim-J}, we have the following.

\begin{cor} \label{cor:selfinjdim}
Let $A$ be a $2$-Gorenstein Nakayama algebra.
Then the right selfinjective dimension of $A$ is finite if and only if the right
selfinjective dimension of $\varepsilon(A)$ is finite. More precisely,
\[
    \idim_{\varepsilon(A)^{\op}}\varepsilon(A)
    =\max\{\idim_{A^{\op}}A-2,0\}.
\]
\end{cor}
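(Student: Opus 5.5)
The plan is to express both right selfinjective dimensions as projective dimensions of left injective modules. Then I compare the two sides term by term using Proposition~\ref{prop:injdim-J}, and use the $2$-Gorenstein hypothesis to dispose of the simple modules that Proposition~\ref{prop:injdim-J} does not cover.

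First, for any Artin algebra $\Lambda$, the standard duality $D$ gives $\idim_{\Lambda^{\op}}\Lambda=\pdim_\Lambda D(\Lambda)$. Since $D(\Lambda)$ is the direct sum of the indecomposable injective left modules, this quantity is the maximum of $\pdim_\Lambda I$ over the indecomposable injective left $\Lambda$-modules $I$. For $A$ this yields
\[
\idim_{A^{\op}}A=\max\{\pdim_AI(S)\mid S\in\scS\}.
\]
For $\varepsilon(A)$ I transport the computation along the equivalence $\scE\simeq\mod\varepsilon(A)$. This equivalence preserves projective objects, injective objects and projective dimensions. The indecomposable injective objects of $\scE$ are exactly the modules $J(S)$ with $\pdim_AS\neq1$, by \cite[Lemma 3.3]{She26}. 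Hence
\[
\idim_{\varepsilon(A)^{\op}}\varepsilon(A)=\max\{\pdim_\scE J(S)\mid S\in\scS,\ \pdim_AS\neq1\}.
\]
By Proposition~\ref{prop:injdim-J} the right-hand side equals $\max\{\max\{\pdim_AI(S)-2,0\}\mid \pdim_AS\neq1\}$.

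It remains to see that the simple modules $S$ with $\pdim_AS=1$, which are missing from the second maximum, do not change the value of $\max\{\pdim_AI(S)-2,0\}$ taken over all $S\in\scS$. Here the hypothesis enters. By Lemma~\ref{lem:2-gor}, since $A$ is $2$-Gorenstein, $\pdim_AI(S)\le1$ for every such $S$, so each of them contributes $0$ to $\max\{\pdim_AI(S)-2,0\}$. Therefore
\[
\idim_{\varepsilon(A)^{\op}}\varepsilon(A)=\max\bigl\{\max_{S\in\scS}\pdim_AI(S)-2,\,0\bigr\}=\max\{\idim_{A^{\op}}A-2,0\},
\]
with the convention $\infty-2=\infty$. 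This covers the infinite case as well: the right-hand side is finite exactly when $\idim_{A^{\op}}A$ is finite. That gives the stated equivalence of finiteness, since there are only finitely many simple modules.

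The main point needing care is the transport step. I must check that the equivalence $\scE\simeq\mod\varepsilon(A)$ identifies the indecomposable injective $\varepsilon(A)$-modules with the $J(S)$. I must also check that it preserves projective dimension, which holds because projective resolutions in $\scE$ correspond to those in $\mod\varepsilon(A)$. Finally, $\pdim_\scE$ agrees with $\pdim_A$ by Lemma~\ref{lem:ext-comparison}. The rest is bookkeeping with maxima.
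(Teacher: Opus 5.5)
Your proposal is correct and follows the paper's proof essentially step for step. Both write the right selfinjective dimensions as projective dimensions of $D(A_A)$ and of the injective objects $J(S)$ of $\scE\simeq\mod\varepsilon(A)$, apply Proposition~\ref{prop:injdim-J} to the simples with $\pdim_AS\neq1$, and use Lemma~\ref{lem:2-gor} to see that the omitted simples, those with $\pdim_AS=1$, satisfy $\pdim_AI(S)\le1$ and so do not change the maximum. Your remarks on transporting projective dimension along the equivalence and on the convention $\infty-2=\infty$ only make explicit what the paper leaves implicit.
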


\begin{proof}
The $A$-module $D(A_A)$ is the direct sum of the indecomposable injective
 $A$-modules. Then
\[
  \idim_{A^{\op}}A=\pdim_A D(A_A)=\max_S\pdim_A I(S).
\]

Via the equivalence
$\scE\simeq\mod \varepsilon(A)$, 
the modules $J(S)$, with $\pdim_AS\neq1$, 
form a complete set of indecomposable injective objects.
For the omitted simple modules, the $2$-Gorenstein hypothesis gives
$\pdim_AI(S)\leq1$. Proposition~\ref{prop:injdim-J} therefore yields
\[
  \idim_{\varepsilon(A)^{\op}}\varepsilon(A)
  =\max_{\pdim_AS\neq1}\pdim_{\scE}J(S)
  =\max\{\idim_{A^{\op}}A-2,0\}.
\]
This completes the proof.
\end{proof}

\begin{exam} \label{exam:4-simple}
Let $A$ be a Nakayama algebra with Kupisch series 
\[(3,3,2,1).\]
There are exactly two simple modules with injective dimension $1$, 
namely $S(2)$ and $S(3)$.
Their projective covers are
\[P(2)=[2,5) \text{ and }  P(3)=[3,5).\]
Note that $P(2)$ is injective and $\idim_AP(3)=2$.
By Lemma \ref{lem:2-gor}, the algebra $A$ is not $2$-Gorenstein.
\end{exam}

For $A$-modules $M$ and $N$, denote by $\oHom_A(M,N)$
the quotient of $\Hom_A(M,N)$ by the submodule of
homomorphisms factoring through injective $A$-modules.

\begin{lem}\label{lem:uniserial-observation}
Let $A$ be a  Nakayama algebra and let $\nabla$ be a base module.  
Suppose that $T$ is a simple module with $\idim_AT=1$ and $\idim_AP(T)\le 1$, 
then
\[
  \oHom_A(P(T),D\Tr\nabla)=0.
\]\end{lem}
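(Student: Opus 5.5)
The plan is to convert the stable Hom into an Ext group with the Auslander–Reiten formula, and then compute that Ext group using uniseriality. The formula $\oHom_A(X,D\Tr Y)\cong D\Ext^1_A(Y,X)$ gives
\[
\oHom_A(P(T),D\Tr\nabla)\cong D\Ext^1_A(\nabla,P(T)),
\]
so it suffices to prove $\Ext^1_A(\nabla,P(T))=0$.

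If $P(T)$ is injective there is nothing to do. Otherwise $\idim_AP(T)=1$, and $P(T)$ has a minimal injective coresolution
\[
0\to P(T)\to I(P(T))\to I'\to 0
\]
with $I'$ indecomposable injective. Uniseriality identifies $I'$: the composition factor of $I(P(T))$ sitting directly above the top $T$ of $P(T)$ is $\tau^{-1}T$, so $I'\cong I(\tau^{-1}T)$. This is consistent with Lemma \ref{lem:bijection}(2), since $\idim_AT=1$ means $I(\tau^{-1}T)$ is not projective. Applying $\Hom_A(\nabla,-)$ to the coresolution shows that $\Ext^1_A(\nabla,P(T))$ is the cokernel of
\[
\Hom_A(\nabla,I(P(T)))\to\Hom_A(\nabla,I(\tau^{-1}T)).
\]
So I must show that every map $\nabla\to I(\tau^{-1}T)$ lifts.

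Next I translate liftability into a length condition. A nonzero map $f\colon\nabla\to I(\tau^{-1}T)$ has as image a quotient $Y=\nabla/U$ with socle $\tau^{-1}T$; hence $U$ is a submodule of $\nabla$ with top $T$. Any lift of $f$ has image a submodule $Z$ of the uniserial module $I(P(T))$ that maps onto $Y\neq0$. Therefore $Z\not\subseteq P(T)$, so $Z$ is the full preimage of $Y$, and $\ell(Z)=\ell(Y)+\ell(P(T))$. Consequently $f$ lifts if and only if $\nabla$ has a quotient of that length, that is, if and only if $\ell(U)=\ell(P(T))$, which means $U\cong P(T)$. Since $U$ is a quotient of $P(T)$, the claim reduces to the following statement:

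\emph{If $U\subseteq\nabla$ is a submodule with top $T$, then $U$ is projective.}

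This is the step I expect to be the main obstacle, and I would attack it using the structure of $\nabla=\nabla(S)\subseteq P(\tau S)$. Recall that $P(\tau S)$ is projective-injective and that $\nabla(S)$ is the minimal submodule of $P(\tau S)$ whose top has injective dimension different from $1$. Note also that $T$ cannot be the top of $\nabla$, because $\idim_AT=1$.

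The argument I would try first runs as follows. Let $V$ be the submodule of $P(\tau S)$ with top $T$, so $U=V\subseteq\nabla\subseteq P(\tau S)$. The surjection $P(T)\to V$ has kernel $K$. If $K\neq0$, then $P(T)$ is strictly longer than $V$, so it extends beyond $\soc P(\tau S)$. I would then compare the injective envelopes $I(P(T))$ and $P(\tau S)$ inside the uniserial picture and use $\idim_AP(T)=1$. The goal is to produce a module lying strictly between $\nabla$ and $T$ whose top has injective dimension different from $1$, contradicting the minimality of $\nabla(S)$. Alternatively, the goal is to show that $\tau^{-1}T$, being the top of $V/\ldots$, would have to satisfy $I(\tau^{-1}T)$ projective, contradicting $\idim_AT=1$ via Lemma \ref{lem:bijection}(2).

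If this direct approach stalls, I would instead use the Kupisch inequalities $c_i\le c_{i+1}+1$ along the interval $[\tau S,\ \tau S+c_{\tau S})$, together with Lemma \ref{lem:madsen}, to pin down the lengths directly.
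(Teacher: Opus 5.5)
\textbf{There is a genuine gap.} Your reduction is sound as far as it goes. The Auslander--Reiten formula turns the claim into $\Ext^1_A(\nabla,P(T))=0$. This reverses the way the paper uses the lemma in Lemma~\ref{lem:ext-van}, but it is not circular. Your lifting analysis is also right: a nonzero $f\colon\nabla\to I(\tau^{-1}T)$ with kernel $U$ lifts exactly when $\ell(U)\ge\ell(P(T))$. However, the proposal stops precisely at the crux. You never prove that every submodule $U\subseteq\nabla$ with top $T$ equals $P(T)$, and the routes you sketch do not reach it. Minimality of $\nabla(S)$ with respect to its \emph{top} says nothing about whether $P(T)$ extends past $\soc P(\tau S)$, which is the real issue. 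You have also overlooked the case $U=0$, that is, $\soc\nabla\cong\tau^{-1}T$. There $f$ is injective and can never lift, since $\ell(\nabla)<\ell(\nabla)+\ell(P(T))$. So this case must be excluded as well, not only the case where $U$ is a proper quotient of $P(T)$.

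The missing ingredient is a \emph{socle} condition on $\nabla$. Since $\nabla(S)\subseteq P(\tau S)$ is uniserial, $\soc\nabla=\soc P(\tau S)$. Hence $\tau\soc\nabla=\gamma(\tau S)$ lies in the image of $\gamma$, and by Lemma~\ref{lem:bijection}(3) its injective dimension is $\neq 1$. When $\idim_AP(T)=1$, Lemma~\ref{lem:madsen} says every composition factor of $P(T)$ has injective dimension $1$. This disposes of both cases at once. If $U=0$, then $\tau\soc\nabla\cong T$. If $U$ is a proper quotient of $P(T)$ with $\soc U=\soc\nabla$, then $\tau\soc\nabla$ is the next composition factor of $P(T)$. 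Either way this is a contradiction. Once you have this fact, the Ext detour is unnecessary, and the paper argues directly. If $D\Tr\nabla\neq 0$, it is uniserial with $\soc D\Tr\nabla\cong\tau\soc\nabla=\gamma(\tau S)$. Any nonzero map $P(T)\to D\Tr\nabla$ would have image containing this socle, making it a composition factor of $P(T)$. Therefore even the unstable group $\Hom_A(P(T),D\Tr\nabla)$ vanishes.
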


\begin{proof}
If $P(T)$ is injective, 
every morphism from $P(T)$  factors through an injective module, 
and the assertion is immediate.
Suppose that $\idim_AP(T)=1$. 
By Lemma~\ref{lem:madsen},
every composition factor of $P(T)$ has injective dimension $1$.

Write $\nabla=\nabla(S)$ where $\pdim_AS\neq 1$.
If $\nabla$ is projective, then $D \Tr\nabla$ is zero. 
Otherwise, $D \Tr\nabla$ is an indecomposable module. 
Using the standard description of the Auslander-Reiten
translate of an indecomposable module, we obtain
\[
\soc D\Tr\nabla(S)
\cong
\tau\soc\nabla(S)
\cong
\tau\soc P(\tau S)
=
\gamma(\tau S).
\]
The injective dimension of this simple module is different from $1$ 
by Lemma \ref{lem:bijection}.
If a nonzero morphism from $P(T)$ to $D\Tr\nabla$ existed, its image
would be a nonzero submodule of the uniserial module $D\Tr\nabla$ and would
therefore contain its socle. Then $\soc D\Tr\nabla$ would be a composition factor
of $P(T)$, contradicting Lemma~\ref{lem:madsen}. Hence,
\[\Hom_A(P(T),D\Tr\nabla)=0\] 
and the injectively stable quotient is zero as well.
\end{proof}

\begin{lem} \label{lem:ext-van}
Suppose $A$ is a $2$-Gorenstein Nakayama algebra.
Let $\nabla$ be a base module and let $T$ be a simple module 
with $\idim_AT=1$. Then 
\[
    \Ext_A^n(\nabla,P(T))= 0 \quad \text{for all } n\ge 1.
\]
\end{lem}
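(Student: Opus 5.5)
The plan is to split into the cases $n\geq 2$ and $n=1$. For $n\ge 2$ the vanishing comes from a bound on the injective dimension of $P(T)$. For $n=1$ it comes from the Auslander--Reiten formula together with Lemma~\ref{lem:uniserial-observation}.

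First, since $A$ is a $2$-Gorenstein Nakayama algebra and $\idim_A T=1$, Lemma~\ref{lem:2-gor}, direction (1)$\implies$(4), gives $\idim_A P(T)\le 1$. Consequently
\[
\Ext^n_A(X,P(T))=0
\]
for every $A$-module $X$ and every $n\ge 2$. In particular this holds for $X=\nabla$, so the claim is proved in all degrees $n\geq 2$.

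It remains to treat $n=1$. Here I would use the Auslander--Reiten formula for Artin algebras,
\[
\Ext^1_A(X,Y)\cong D\oHom_A(Y,D\Tr X),
\]
which is valid for all finitely generated $A$-modules $X,Y$. Taking $X=\nabla$ and $Y=P(T)$ gives
\[
\Ext^1_A(\nabla,P(T))\cong D\oHom_A(P(T),D\Tr\nabla).
\]
The hypotheses of Lemma~\ref{lem:uniserial-observation} are exactly satisfied: $\nabla$ is a base module, $\idim_A T=1$, and $\idim_A P(T)\le 1$ by the first step. That lemma gives $\oHom_A(P(T),D\Tr\nabla)=0$, hence $\Ext^1_A(\nabla,P(T))=0$.

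There is no serious obstacle here. The only points needing care are invoking the Auslander--Reiten formula in the correct variance (the injectively stable Hom from $Y$ into $\tau X$), and noting that the degenerate case where $\nabla$ is projective is harmless. In that case $\Ext^1_A(\nabla,-)$ vanishes trivially, consistent with $D\Tr\nabla=0$ as already used in Lemma~\ref{lem:uniserial-observation}.
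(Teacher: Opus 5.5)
Your proof is correct and is essentially the paper's argument. Lemma~\ref{lem:2-gor} gives $\idim_A P(T)\le 1$, which kills $\Ext^n_A(\nabla,P(T))$ for $n\ge 2$. For $n=1$, the Auslander--Reiten formula identifies $\Ext^1_A(\nabla,P(T))$ with the dual of $\oHom_A(P(T),D\Tr\nabla)$, and Lemma~\ref{lem:uniserial-observation} shows this is zero.
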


\begin{proof}
Since $A$ is $2$-Gorenstein and $\idim_AT=1$, 
by Lemma \ref{lem:2-gor}  $\idim_AP(T)\leq 1$. 
It follows that
\[
    \Ext_A^n(\nabla,P(T)) = 0 
\]
for any $n\ge 2$.
By the Auslander-Reiten formula and 
Lemma~\ref{lem:uniserial-observation}, we have
\[
    D\Ext_A^1(\nabla,P(T))\cong \oHom_A(P(T),D\Tr\nabla)=0.
\]
Since the  duality functor $D$ is faithful, 
$\Ext_A^1(\nabla,P(T))$ is zero.
This gives the desired conclusion.
\end{proof}

The tops of the indecomposable summands of $A_\varepsilon$ are precisely the
simple modules of injective dimension different from one. 
Consequently, there is a decomposition
\[
  A=A_1\oplus A_\varepsilon,
\]
where $A_1$ is the direct sum of the  modules $P(T)$
with $\idim_AT=1$.

\begin{cor} \label{cor:ext-2}
Let $A$ be a $2$-Gorenstein Nakayama algebra 
and $S$ a simple module  with  $\pdim_AS\neq 1$. 
For every $n\geq 3$, there is a natural isomorphism 
\[
    \Ext^n_A(S,A)\cong \Ext^{n-2}_\scE(\nabla(S),A_\varepsilon).
\]
\end{cor}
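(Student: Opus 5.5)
Dimension shifting along the defining sequence of $\nabla(S)$ reduces $\Ext^n_A(S,-)$ to $\Ext^{n-2}_A(\nabla(S),-)$. We then split $A=A_1\oplus A_\varepsilon$, use Lemma~\ref{lem:ext-van} to discard the $A_1$ part, and use Lemma~\ref{lem:ext-comparison} to pass from $\mod A$ to $\scE$.

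First, since $\pdim_AS\neq 1$, we have the exact sequence
\[
0\lto\nabla(S)\lto P(\tau S)\lto P(S)\lto S\lto 0 .
\]
If $S$ is projective the sequence degenerates. Then $\nabla(S)=0$ and $\Ext^n_A(S,A)=0$ for $n\geq 1$, and both sides vanish. Otherwise, set $K=\Omega S=\rad P(S)$. We get short exact sequences $0\to K\to P(S)\to S\to 0$ and $0\to\nabla(S)\to P(\tau S)\to K\to 0$. Applying $\Hom_A(-,A)$ to each gives, for $n\geq 2$, $\Ext^n_A(S,A)\cong\Ext^{n-1}_A(K,A)$. For $n-1\geq 2$, that is $n\geq 3$, it also gives $\Ext^{n-1}_A(K,A)\cong\Ext^{n-2}_A(\nabla(S),A)$. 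Both isomorphisms come from connecting homomorphisms and are therefore natural. This is exactly where the hypothesis $n\geq3$ enters: it keeps us above degree one, where the $\Hom$ and $\Ext^1$ terms could interfere.

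Next, write $A=A_1\oplus A_\varepsilon$. Then $\Ext^{n-2}_A(\nabla(S),A)$ splits as
\[
\Ext^{n-2}_A(\nabla(S),A_1)\oplus\Ext^{n-2}_A(\nabla(S),A_\varepsilon).
\]
Since $A$ is $2$-Gorenstein, Lemma~\ref{lem:ext-van} gives $\Ext^{m}_A(\nabla,P(T))=0$ for all $m\geq1$ and all $T$ with $\idim_AT=1$. Hence the first summand vanishes when $n-2\geq 1$.

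The remaining case is $n-2=0$, i.e.\ $n=3$, where we would need $\Hom_A(\nabla(S),A_1)=0$. This is the step I expect to be the main obstacle. The plan is to argue by uniseriality, as in Lemma~\ref{lem:uniserial-observation}. A nonzero map $\nabla(S)\to P(T)$ has image a nonzero quotient of $\nabla(S)$, so it has top $\gamma S$, whose injective dimension is not $1$ by Lemma~\ref{lem:bijection}. On the other hand, $\idim_AP(T)\le1$ by Lemma~\ref{lem:2-gor}. If $P(T)$ is non-injective, Lemma~\ref{lem:madsen} says every composition factor of $P(T)$ has injective dimension $1$, and we get a contradiction. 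If $P(T)$ is projective-injective, this argument does not apply, so we need a separate one. One could use that the image is a submodule of $P(T)$ whose top lies outside the tops of $A_1$. Alternatively, and more cleanly, the $n=3$ case is handled by $\Ext^{3}_A(S,A)\cong\Ext^1_A(\nabla(S),A)$ through the alternative shift. Both sides then become $\Hom$ modulo maps factoring through $P(\tau S)$. Consequently, I would check the $n=3$ case via $\Hom_{\scE}$ together with the fact that $\nabla(S)\to P(\tau S)$ is the $\scE$-injective/projective situation. I would resolve this degree carefully, possibly noting that maps into $A_1$ factor through $P(\tau S)$.

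Finally, $\nabla(S)$ and $A_\varepsilon$ both lie in $\scE$. Lemma~\ref{lem:ext-comparison} therefore gives a natural isomorphism $\Ext^{n-2}_A(\nabla(S),A_\varepsilon)\cong\Ext^{n-2}_\scE(\nabla(S),A_\varepsilon)$. Composing all the isomorphisms yields the claim.
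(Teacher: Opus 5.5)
Your main line is the paper's proof. You dimension-shift along $0\to\nabla(S)\to P(\tau S)\to P(S)\to S\to 0$, split $A=A_1\oplus A_\varepsilon$, kill $\Ext^{n-2}_A(\nabla(S),A_1)$ with Lemma~\ref{lem:ext-van}, and pass to $\scE$ by Lemma~\ref{lem:ext-comparison}. The argument is already complete at your sentence that the $A_1$-summand vanishes for $n-2\geq 1$, because $n\geq 3$ always gives $n-2\geq 1$; for $n=3$ you need only $\Ext^1_A(\nabla(S),A_1)=0$.

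The ``remaining case'' paragraph rests on an arithmetic slip: $n-2=0$ means $n=2$, which is excluded, so you should delete that paragraph. The vanishing $\Hom_A(\nabla(S),A_1)=0$ it aims for is in fact false in general. For Kupisch series $(3,3,3,3,3,2,1)$ one has $\nabla(S(1))=S(4)=\soc P(2)$ with $\idim_AS(2)=1$.

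Similarly, for projective $S$ one has $\nabla(S)=P(\tau S)$, not $0$. No special case is needed there, since the same dimension shift with $\Omega S=0$ works verbatim.
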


\begin{proof}
The defining exact sequence
\[
0\longrightarrow \nabla(S)\longrightarrow P(\tau S)
\longrightarrow P(S)\longrightarrow S\longrightarrow 0
\]
yields, by dimension shifting,
\[
\Ext_A^n(S,A)
\cong
\Ext_A^{n-2}(\nabla(S),A)
\]
for every $n\geq 3$.
Write $A=A_1\oplus A_\varepsilon$. 
By Lemma \ref{lem:ext-van},
\[
\Ext_A^{n-2}(\nabla(S),A_1)=0.
\]
Hence,
\[
\Ext_A^{n-2}(\nabla(S),A)
\cong
\Ext_A^{n-2}(\nabla(S),A_\varepsilon).
\]
Finally, Lemma \ref{lem:ext-comparison} gives
\[
\Ext_A^{n-2}(\nabla(S),A_\varepsilon)
\cong
\Ext_{\scE}^{n-2}
(\nabla(S),A_\varepsilon).
\]
All these isomorphisms are natural.
\end{proof}

We have the following.

\begin{prop}  \label{prop:grade}
Let $A$ be a $2$-Gorenstein Nakayama algebra 
and $S$ a simple module  with  $\pdim_AS\neq 1$. Then 
\[
    \grade_{\scE}\nabla(S)=\max\{0,\grade_AS-2\}.
\]
\end{prop}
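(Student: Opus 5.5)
The proof splits according to whether $\grade_AS\geq 3$ or $\grade_AS\leq 2$. Throughout, $\grade_{\scE}$ is computed against the regular object $A_\varepsilon$, that is,
\[
\grade_\scE X=\min\{m\geq 0\mid \Ext^m_\scE(X,A_\varepsilon)\neq 0\},
\]
which corresponds to the grade over $\varepsilon(A)$ under $\scE\simeq\mod\varepsilon(A)$.

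\emph{Positive degrees.} Corollary~\ref{cor:ext-2} gives, for every $m\geq 1$,
\[
\Ext^m_\scE(\nabla(S),A_\varepsilon)\cong\Ext^{m+2}_A(S,A).
\]
So the vanishing pattern of $\Ext^{\geq 1}_\scE(\nabla(S),A_\varepsilon)$ is that of $\Ext^{\geq 3}_A(S,A)$, shifted down by two. Consequently, once degree $0$ is controlled, the formula follows in every case:
\begin{enumerate}
\item If $\grade_AS=g\geq 3$ and $\Hom_\scE(\nabla(S),A_\varepsilon)=0$, then $\grade_\scE\nabla(S)=g-2$. This includes $g=\infty$, where both sides are $\infty$.
\item If $\grade_AS\leq 2$ and $\Hom_\scE(\nabla(S),A_\varepsilon)\neq 0$, then $\grade_\scE\nabla(S)=0$.
\end{enumerate}
Thus the whole proof reduces to the degree-zero statement
\[
\Hom_A(\nabla(S),A_\varepsilon)\neq 0\iff \grade_AS\leq 2,
\]
using that $\Hom_\scE=\Hom_A$ because $\scE$ is full.

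\emph{Degree zero.} Put $K=\operatorname{rad}P(S)$, the image of $P(\tau S)\to P(S)$, and split the defining sequence into
\[
0\to\nabla(S)\to P(\tau S)\to K\to 0,\qquad 0\to K\to P(S)\to S\to 0.
\]
Apply $\Hom_A(-,A)$ and then project onto the summands $A_1$ and $A_\varepsilon$. Lemma~\ref{lem:ext-van} gives $\Ext^1_A(\nabla(S),A_1)=0$; I expect the same argument (Auslander--Reiten formula together with Lemma~\ref{lem:uniserial-observation}) also controls $\Ext^1_A(-,A_1)$ on $K$. The long exact sequences then give
\[
\Ext^2_A(S,A)\cong\Ext^1_A(K,A)\cong\Coker\bigl(\Hom_A(P(\tau S),A)\to\Hom_A(\nabla(S),A)\bigr),
\]
together with $\Ext^1_A(S,A)\cong\Coker\bigl(\Hom_A(P(S),A)\to\Hom_A(K,A)\bigr)$ and $\Hom_A(S,A)$ at the start.

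If $\grade_AS=2$, the first cokernel is nonzero. Since it lives in the $A_\varepsilon$-component, this gives $\Hom_A(\nabla(S),A_\varepsilon)\neq 0$.

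If $\grade_AS\leq 1$, there is a nonzero map $S\to A$ or an extension $S\to K$-type map. In either case the image lands in a projective whose socle is the socle of $P(\tau S)$. Here $P(\tau S)$ is injective by Lemma~\ref{lem:bijection}, and $\soc P(\tau S)=\soc\nabla(S)$ has top of injective dimension $\neq 1$. Uniseriality then produces a nonzero map from $\nabla(S)$ to a summand of $A_\varepsilon$.

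\emph{Main obstacle.} The hardest step is the converse: when $\grade_AS\geq 3$, show that $\Hom_A(\nabla(S),A_\varepsilon)=0$. In that case $\Hom_A(P(\tau S),A)\to\Hom_A(\nabla(S),A)$ is surjective, so every map $\nabla(S)\to A_\varepsilon$ extends to $P(\tau S)$. I would first try to show two things.
\begin{itemize}
\item $\grade_AS\geq 1$ forces the top $\tau S$ of $P(\tau S)$ to have injective dimension $1$, so that $P(\tau S)$ is a summand of $A_1$.
\item Any map from a summand of $A_1$ into $A_\varepsilon$ kills $\nabla(S)$. This would be a socle argument in the spirit of Lemma~\ref{lem:uniserial-observation}: compare the composition factors of $P(\tau S)$ of injective dimension $1$ with the tops of the summands of $A_\varepsilon$, using Lemma~\ref{lem:madsen}.
\end{itemize}
If the first point fails, I would fall back on the $2$-Gorenstein input. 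Corollary~\ref{cor:s3} and Lemma~\ref{lem:2-gor} say that simples of grade $<3$ have injective envelopes of the matching projective dimension. Pulling back a hypothetical nonzero map $\nabla(S)\to A_\varepsilon$ would then produce a nonzero class in $\Ext^{\leq 2}_A(S,A)$, a contradiction.
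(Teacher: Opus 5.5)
Your reduction to degree zero is sound. Corollary~\ref{cor:ext-2} does carry the vanishing pattern of $\Ext^{\ge 3}_A(S,A)$ over to $\Ext^{\ge 1}_{\scE}(\nabla(S),A_\varepsilon)$. Your grade-$2$ case also works, once you note that $\Ext^2_A(S,A_1)=0$ because $\idim_A A_1\le 1$ by Lemma~\ref{lem:2-gor}(4); Lemma~\ref{lem:uniserial-observation} is not needed for $K$.

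\textbf{The main gap.} The implication $\grade_AS\ge 3\Rightarrow\Hom_A(\nabla(S),A_\varepsilon)=0$, which you rightly call the heart of the matter, is not proved. Your first bullet is true: $\grade_AS\ge 1$ means $I(S)$ is not projective, so $\idim_A\tau S=1$ by Lemma~\ref{lem:bijection}(2). The second bullet, however, is only a hope. The fallback cannot work either. In your exact sequence $\Hom_A(P(\tau S),A)\to\Hom_A(\nabla(S),A)\to\Ext^2_A(S,A)\to 0$, a map $\nabla(S)\to A_\varepsilon$ gives the zero class exactly when it extends to $P(\tau S)$. When $\grade_AS\ge 3$, every map extends. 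So the vanishing of $\Ext^{\le 2}_A(S,A)$ says nothing about $\Hom_A(\nabla(S),A_\varepsilon)$, and no contradiction arises.

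\textbf{The missing idea.} Argue inside $\scE$, using that $\varepsilon(A)$, being Nakayama, is $1$-Gorenstein. The object $\nabla(S)$ is simple in $\scE$ with injective envelope $J(S)$. Theorem~\ref{thm:n-gor-algebra} with $n=1$ then gives $\grade_{\scE}\nabla(S)\ge\min\{1,\pdim_{\scE}J(S)\}$. Lemma~\ref{lem:grade-flat} gives $\pdim_AI(S)\ge\grade_AS\ge 3$, so $\pdim_{\scE}J(S)\ge 1$ by Proposition~\ref{prop:injdim-J}. Hence $\grade_{\scE}\nabla(S)\ge 1$, i.e.\ the Hom vanishes. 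This is exactly the paper's argument.

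\textbf{The case $\grade_AS\le 1$.} Your degree-zero argument here is also flawed.
\begin{itemize}
\item The case $\grade_AS=1$ never occurs: by \eqref{eq:I1}, $\Ext^1_A(S,A)\neq 0$ forces $\pdim_AS=1$.
\item For $\grade_AS=0$, the projective receiving $S$ has socle $S$, which in general is not $\soc P(\tau S)$. For Kupisch series $(2,2,2)$ and $S=S(1)$, we have $S(1)=\soc P(3)$ while $\soc P(2)=S(3)$. So your uniseriality step has nothing to work with.
\end{itemize}
The correct reason is as follows. If $\grade_AS=0$, then $I(S)$ is a summand of $I^0(A)$, which is projective. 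Hence $\idim_A\tau S\neq 1$ by Lemma~\ref{lem:bijection}(2), so $P(\tau S)$ is a summand of $A_\varepsilon$. The inclusion $\nabla(S)\subseteq P(\tau S)$ is then the required nonzero map.

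The paper in fact treats all of $\grade_AS\le 2$ uniformly, which replaces your case analysis through $K$. Corollary~\ref{cor:s3} gives $\pdim_AI(S)=\grade_AS\le 2$. Then $J(S)$ is projective in $\scE$ by Proposition~\ref{prop:injdim-J}, hence a summand of $A_\varepsilon$ containing $\nabla(S)$.
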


\begin{proof}
If $\grade_AS<3$, by Corollary \ref{cor:s3} we have
\[
    \pdim_AI(S)=\grade_AS < 3.
\]
Proposition~\ref{prop:injdim-J} then shows that $J(S)$ is projective in $\scE$.
Since $\nabla(S)$ embeds into its injective envelope $J(S)$ and $J(S)$ is a
summand of $A_\varepsilon$, we have 
$\Hom_{\scE}(\nabla(S),A_\varepsilon)$ is nonzero.
Then $\grade_{\scE}\nabla(S)=0$.

Assume now that $g=\grade_AS\geq3$. 
By Lemma \ref{lem:grade-flat} $\pdim_AI(S)\geq3$, so
$J(S)$ is non-projective by Proposition~\ref{prop:injdim-J}. Since
$\varepsilon(A)$ is also a Nakayama algebra, it is $1$-Gorenstein. Applying
Theorem~\ref{thm:n-gor-algebra} inside $\scE$ gives
$\grade_{\scE}\nabla(S)\geq1$.
Then
\[
    \Hom_{\scE}(\nabla(S),A_\varepsilon)=0.
\]

Suppose first that $g=\infty$. For every $m\geq 1$,
Corollary~\ref{cor:ext-2} gives
\[
\Ext_{\scE}^{m}(\nabla(S),A_\varepsilon)\cong\Ext_A^{m+2}(S,A)=0.
\]
Together with $\Hom_{\scE}(\nabla(S),A_\varepsilon)=0$,
this shows that $\grade_{\scE}\nabla(S)=\infty.$

Assume now that $3\leq g<\infty$. Corollary~\ref{cor:ext-2}
gives
\[
\Ext_{\scE}^{m}(\nabla(S),A_\varepsilon)=0
\qquad\text{for }m<g-2,
\]
whereas $\Ext_{\scE}^{g-2}(\nabla(S),A_\varepsilon)$ is nonzero.
Therefore, the $\scE$-grade of $\nabla(S)$ is $g-2$.
\end{proof}

The following is the main result of this section.
We adopt the convention that every  algebra is 0-Gorenstein.

\begin{thm} \label{thm:nak-n-gor}
Let $A$ be a Nakayama algebra and $n\ge 2$. 
Then $A$ is $n$-Gorenstein if and only if 
$A$ is $2$-Gorenstein and $\varepsilon(A)$ is $(n-2)$-Gorenstein.
\end{thm}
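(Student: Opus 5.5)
We will prove the equivalence by comparing the criteria of Theorem~\ref{thm:n-gor-algebra} for $A$ and for $\varepsilon(A)$. The comparison runs through the bijection $S\mapsto \nabla(S)$ between simple $A$-modules with $\pdim_AS\neq1$ and simple objects of $\scE\simeq\mod\varepsilon(A)$. Under the equivalence $\scE\simeq\mod\varepsilon(A)$, the injective envelope of $\nabla(S)$ in $\scE$ is $J(S)$. Moreover, grades and projective dimensions in $\scE$ agree with those over $\varepsilon(A)$, since $A_\varepsilon$ corresponds to the regular module. Hence, by Theorem~\ref{thm:n-gor-algebra}, $\varepsilon(A)$ is $(n-2)$-Gorenstein if and only if
\[
\grade_\scE\nabla(S)\ge\min\{n-2,\pdim_\scE J(S)\}\quad\text{for all } S \text{ with } \pdim_AS\neq1.
\]
Proposition~\ref{prop:grade} and Proposition~\ref{prop:injdim-J} express both sides in terms of $A$, provided $A$ is $2$-Gorenstein.

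For ``$\implies$'': an $n$-Gorenstein algebra with $n\ge2$ is $2$-Gorenstein. Let $S$ be a simple module with $\pdim_AS\neq1$, and write $g=\grade_AS$ and $p=\pdim_AI(S)$. By Theorem~\ref{thm:n-gor-algebra} we have $g\ge\min\{n,p\}$, and we need
\[
\max\{g-2,0\}\ge\min\{n-2,\max\{p-2,0\}\}.
\]
If $p\le 2$, the right side is $0$ and there is nothing to show. If $p\ge3$, then $g\ge\min\{n,p\}$ gives $g-2\ge\min\{n-2,p-2\}$. So the required inequality holds, and $\varepsilon(A)$ is $(n-2)$-Gorenstein.

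For ``$\impliedby$'': assume $A$ is $2$-Gorenstein and $\varepsilon(A)$ is $(n-2)$-Gorenstein. By Lemma~\ref{lem:2-gor}, $A$ is also $3$-Gorenstein, so the case $n\le3$ is immediate. For larger $n$ we verify condition (3) of Theorem~\ref{thm:n-gor-algebra}: every simple $S$ with $g=\grade_AS<n$ must satisfy $p=g$.
\begin{itemize}
\item If $g<3$, this is Corollary~\ref{cor:s3}.
\item If $g\ge3$ and $\pdim_AS=1$, then $\Ext^1_A(S,A)=0$ and $\Ext^n_A(S,A)=0$ for $n\ge2$, forcing $g\in\{1,\infty\}$. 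The case $g=1$ is excluded by $g\ge3$, and $g=\infty$ is excluded by $g<n$. So this case is vacuous. (Alternatively, the $2$-Gorenstein hypothesis gives $p\le1$ and $g\le p$ by Lemma~\ref{lem:grade-flat}.)
\item If $g\ge3$ and $\pdim_AS\neq1$, then $\grade_\scE\nabla(S)=g-2<n-2$ by Proposition~\ref{prop:grade}. The $(n-2)$-Gorenstein property of $\varepsilon(A)$, in form (3) of Theorem~\ref{thm:n-gor-algebra}, gives $\pdim_\scE J(S)=g-2\ge1$. Proposition~\ref{prop:injdim-J} then gives $p-2=g-2$, that is, $p=g$.
\end{itemize}

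The main subtlety is the bookkeeping around the truncations $\max\{\cdot,0\}$ in Propositions~\ref{prop:grade} and~\ref{prop:injdim-J}, which is handled by the split at $g<3$ versus $g\ge3$. The second point to watch is the simples with $\pdim_AS=1$, which have no counterpart in $\scE$. These are controlled entirely by the $2$-Gorenstein hypothesis through Lemma~\ref{lem:2-gor}.
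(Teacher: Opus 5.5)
Your proof is correct and follows essentially the same route as the paper: both directions compare the criterion of Theorem~\ref{thm:n-gor-algebra} for $A$ and for $\varepsilon(A)$ through Propositions~\ref{prop:grade} and~\ref{prop:injdim-J}, and your $\implies$ direction matches the paper's except that you use a case split where the paper uses min/max distributivity. The only real variation is in $\impliedby$: you use form (3) of Theorem~\ref{thm:n-gor-algebra}, split at $\grade_AS<3$ via Corollary~\ref{cor:s3}, and treat $\pdim_AS=1$ separately, whereas the paper uses form (2), splits at $\grade_AS<2$, and obtains $\pdim_AS\ge 2$ (so that $\nabla(S)$ is defined) from Lemma~\ref{lem:grade-flat}.
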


\begin{proof}
$``\implies"$
Since $A$ is $n$-Gorenstein, 
$A$ is $2$-Gorenstein and 
\[
    \grade_AS\ge\min\{n,\pdim_AI(S)\}
\]
for any simple module $S$.
If $\pdim_AS\neq 1$,
by Propositions \ref{prop:grade} and  \ref{prop:injdim-J} we have
\[
    \begin{aligned}
    \grade_{\scE}\nabla(S) &=\max\{\grade_AS-2,0\}\\
                        &\ge \max\{\min\{n-2,\pdim_AI(S)-2\},0\}\\
                        & =\min\{n-2,\max\{\pdim_AI(S)-2,0\}\}\\
                        & =\min\{n-2,\pdim_{\scE}J(S)\}.
    \end{aligned}
\]
Here, we make use of the distributive identity for min and max operations.
Then the algebra $\varepsilon(A)$ is $(n-2)$-Gorenstein.

$``\impliedby"$
Let $S$ be a simple $A$-module with $\grade_AS< 2$. 
Since $A$ is $2$-Gorenstein, 
it follows from Theorem \ref{thm:n-gor-algebra} that
\[
    \grade_AS = \pdim_AI(S)\geq \min\{n,\pdim_AI(S)\}.
\]

Let $S$ be a simple $A$-module  with $\grade_AS \ge 2$.
Applying Lemma \ref{lem:grade-flat} to both $S$ and $I(S)$,
the two modules have projective dimension  at least $2$.
Since $\varepsilon(A)$ is $(n-2)$-Gorenstein, 
by Propositions \ref{prop:grade} and  \ref{prop:injdim-J}, 
we obtain
\[
    \begin{aligned} 
    \grade_AS &= \grade_{\scE}\nabla(S)+2\\
              &\ge\min\{n,\pdim_{\scE}J(S)+2\}\\
              &=\min\{n,\pdim_AI(S)\}.
    \end{aligned}
\]
Then the algebra $A$ is $n$-Gorenstein.
\end{proof}

Set $\varepsilon^0(A)=A$ and
$\varepsilon^{i}(A)=
\varepsilon(\varepsilon^{i-1}(A))$
for every $i\geq1$.
The algebra $\varepsilon^i(A)$ is said to be the 
$i$-th iterated syzygy filtered algebra of $A$.

Combining Theorem \ref{thm:nak-n-gor} with Lemma \ref{lem:2-gor}, 
we have the following.

\begin{cor} \label{cor:n-gor}
Let $A$ be a Nakayama algebra and $n\ge 0$. 
Then $A$ is $n$-Gorenstein if and only if 
$\varepsilon^i(A)$ is $2$-Gorenstein for every $0\le i < \lfloor n/2\rfloor$.
\end{cor}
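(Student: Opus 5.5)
The plan is induction on $n$, with Theorem~\ref{thm:nak-n-gor} supplying the inductive step and Lemma~\ref{lem:2-gor} together with $1$-Gorensteinness handling the small cases. Throughout we use that each iterate $\varepsilon^i(A)$ is again a Nakayama algebra satisfying (N1) and (N2). This is needed so that Theorem~\ref{thm:nak-n-gor} and Lemma~\ref{lem:2-gor} may be applied to $\varepsilon(A)$ and its iterates. I would justify it from the paper's statement that $\varepsilon(A)$ is Nakayama, together with the equivalence $\scE\simeq\mod\varepsilon(A)$: the simple objects $\nabla(S)$ are bricks, and their endomorphism rings are isomorphic to those of simple $A$-modules.

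The base cases are as follows.
\begin{itemize}
\item For $n=0$ and $n=1$ we have $\lfloor n/2\rfloor=0$, so the right-hand condition is vacuous. Every algebra is $0$-Gorenstein by convention, and every Nakayama algebra is $1$-Gorenstein since its dominant dimension is at least $1$. Both sides are therefore true.
\item For $n=2$ and $n=3$ we have $\lfloor n/2\rfloor=1$, and the condition reads ``$A$ is $2$-Gorenstein''. This is equivalent to $A$ being $n$-Gorenstein by Lemma~\ref{lem:2-gor}, which shows that $2$- and $3$-Gorenstein coincide for Nakayama algebras.
\end{itemize}

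For the inductive step, let $n\ge 2$ and assume the statement for $n-2$ and all Nakayama algebras. By Theorem~\ref{thm:nak-n-gor}, $A$ is $n$-Gorenstein if and only if $A$ is $2$-Gorenstein and $\varepsilon(A)$ is $(n-2)$-Gorenstein. Applying the induction hypothesis to $\varepsilon(A)$, the latter holds if and only if $\varepsilon^{j}(\varepsilon(A))=\varepsilon^{j+1}(A)$ is $2$-Gorenstein for every $0\le j<\lfloor (n-2)/2\rfloor=\lfloor n/2\rfloor-1$. Reindexing with $i=j+1$ and adjoining the condition on $\varepsilon^0(A)=A$ gives exactly the condition for $0\le i<\lfloor n/2\rfloor$. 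Since Theorem~\ref{thm:nak-n-gor} already covers $n=2,3$, strictly only $n=0,1$ need a separate argument.

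There is no real obstacle, only bookkeeping. The points to check are the floor arithmetic and the hypothesis that the iterated algebras remain in the class where the earlier results apply. If $\varepsilon(A)$ is disconnected, (N1) is still fine, since each component of a subcategory of a linear or cyclic Nakayama category is linear, or the whole quiver is cyclic. One also notes that if some $\varepsilon^i(A)$ becomes zero or selfinjective, the conditions are trivially satisfied, consistent with both sides.
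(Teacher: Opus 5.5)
Your proposal is correct and follows the paper's route. You iterate Theorem~\ref{thm:nak-n-gor} via induction on $n$ with the reindexing $\lfloor (n-2)/2\rfloor=\lfloor n/2\rfloor-1$, settle $n=0,1$ by the $0$-Gorenstein convention and the $1$-Gorensteinness of Nakayama algebras, and invoke Lemma~\ref{lem:2-gor} for the odd end (which, as you observe, is already covered by the case $n=3$ of the theorem). Your check that the iterates $\varepsilon^i(A)$ still satisfy (N1) and (N2) is a point the paper leaves implicit.
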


\begin{proof}
For $n=0,1$ the assertion follows from the convention for $0$-Gorenstein algebras
and the fact that every Nakayama algebra is $1$-Gorenstein. For $n\geq2$, iterate
Theorem~\ref{thm:nak-n-gor}. Lemma~\ref{lem:2-gor}, which identifies the
$2$-Gorenstein and $3$-Gorenstein conditions, handles the final odd step.
\end{proof}

Combining Theorem \ref{thm:nak-n-gor} with  Corollary \ref{cor:selfinjdim}, 
we obtain the following.

\begin{thm} \label{thm:nak-ausgor}
Let $A$ be a Nakayama algebra.
Then $A$ is Auslander-Gorenstein  if and only if 
$A$ is $2$-Gorenstein and $\varepsilon(A)$ is Auslander-Gorenstein.
In this case, writing $\operatorname{vdim}A$ for the common two-sided
selfinjective dimension, one has
\[
    \operatorname{vdim}\varepsilon(A)=\max\{\operatorname{vdim}A-2,0\}.
\]
\end{thm}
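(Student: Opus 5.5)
Both implications reduce to two inputs. The first is Theorem~\ref{thm:nak-n-gor}, which handles the ``$n$-Gorenstein for all $n$'' part. The second is Corollary~\ref{cor:selfinjdim} together with Lemma~\ref{lem:gor}, which handle finiteness and the value of the selfinjective dimension. Throughout, ``Auslander-Gorenstein'' means $n$-Gorenstein for every $n$ together with finite two-sided selfinjective dimension. Since $\varepsilon(A)$ is an Artin (Nakayama) algebra, Lemma~\ref{lem:gor} lets me test finiteness on one side only, as soon as the algebra is $n$-Gorenstein for all $n$.

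For ``$\implies$'', assume $A$ is Auslander-Gorenstein. Taking $n=2$ shows $A$ is $2$-Gorenstein. For every $m\ge 0$, apply Theorem~\ref{thm:nak-n-gor} with $n=m+2$: since $A$ is $(m+2)$-Gorenstein, $\varepsilon(A)$ is $m$-Gorenstein. So $\varepsilon(A)$ is $m$-Gorenstein for all $m$. The right selfinjective dimension of $A$ is finite, so Corollary~\ref{cor:selfinjdim} gives $\idim_{\varepsilon(A)^{\op}}\varepsilon(A)=\max\{\idim_{A^{\op}}A-2,0\}<\infty$. Lemma~\ref{lem:gor} applied to $\varepsilon(A)$ then gives finiteness on the left as well, so $\varepsilon(A)$ is Auslander-Gorenstein.

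For ``$\impliedby$'', assume $A$ is $2$-Gorenstein and $\varepsilon(A)$ is Auslander-Gorenstein. Every algebra is $0$-Gorenstein and every Nakayama algebra is $1$-Gorenstein, and $A$ is $2$-Gorenstein by hypothesis. For $n\ge 2$, Theorem~\ref{thm:nak-n-gor} gives that $A$ is $n$-Gorenstein, because $\varepsilon(A)$ is $(n-2)$-Gorenstein. So $A$ is $n$-Gorenstein for all $n$. Since $\idim_{\varepsilon(A)^{\op}}\varepsilon(A)$ is finite, Corollary~\ref{cor:selfinjdim} gives $\idim_{A^{\op}}A<\infty$, and Lemma~\ref{lem:gor} gives the left selfinjective dimension of $A$ is finite too.

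For the formula, recall the standard fact (also used in \cite{AR94}) that for an Artin algebra whose left and right selfinjective dimensions are both finite, the two dimensions coincide. So $\operatorname{vdim}A=\idim_{A^{\op}}A$, and likewise $\operatorname{vdim}\varepsilon(A)=\idim_{\varepsilon(A)^{\op}}\varepsilon(A)$. Corollary~\ref{cor:selfinjdim} then reads directly as $\operatorname{vdim}\varepsilon(A)=\max\{\operatorname{vdim}A-2,0\}$.

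The only step needing care is this passage from the one-sided computation in Corollary~\ref{cor:selfinjdim} to the two-sided statement. It relies on Lemma~\ref{lem:gor} together with the equality of left and right selfinjective dimensions when both are finite. I would cite that equality as a standard fact (Zaks' lemma) rather than reprove it. Everything else is bookkeeping with the convention for $0$-Gorenstein algebras and the index shift $n\mapsto n-2$.
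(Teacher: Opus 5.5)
Your proof is correct and is essentially the paper's argument: Theorem~\ref{thm:nak-n-gor} handles the $n$-Gorenstein condition in all degrees, Corollary~\ref{cor:selfinjdim} transfers finiteness of the right selfinjective dimension under the $2$-Gorenstein hypothesis, Lemma~\ref{lem:gor} supplies the left side, and Zaks' equality of the two selfinjective dimensions turns Corollary~\ref{cor:selfinjdim} into the formula for $\operatorname{vdim}$. You only spell out the index shift $n\mapsto n-2$ and the trivial cases $n=0,1$ more explicitly than the paper does.
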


\begin{proof}
By Theorem~\ref{thm:nak-n-gor}, 
the algebra $A$ is $n$-Gorenstein for every $n$ if and only if
$A$ is $2$-Gorenstein and $\varepsilon(A)$ is $n$-Gorenstein for every $n\geq 0$.
Corollary~\ref{cor:selfinjdim} shows that, under the $2$-Gorenstein hypothesis,
the right selfinjective dimension of $A$ is finite if and only if that of
$\varepsilon(A)$ is finite. Lemma~\ref{lem:gor} then gives the corresponding
finiteness on the left. This proves the equivalence.

For a two-sided Noetherian ring whose two selfinjective dimensions are finite,
the two dimensions are equal; see \cite{Zak69}. The displayed formula therefore
follows from Corollary~\ref{cor:selfinjdim}.
\end{proof}

For algebras of selfinjective dimension at least $3$, we have the following.

\begin{cor} \label{cor:nak-d-ausgor}
Let $A$ be a Nakayama algebra and $d\geq 3$.
Then $A$ is Auslander-Gorenstein of selfinjective dimension $d$ if and only if 
$A$ is $2$-Gorenstein and $\varepsilon(A)$ is Auslander-Gorenstein 
of selfinjective dimension $d-2$.
\end{cor}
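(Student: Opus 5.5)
This is a direct specialization of Theorem~\ref{thm:nak-ausgor} to the case $d\geq 3$. I will prove the two implications separately and take care of the selfinjective dimension bookkeeping along the way.

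For the forward direction, suppose $A$ is Auslander-Gorenstein with $\operatorname{vdim}A=d\geq3$. Being Auslander-Gorenstein, $A$ is $n$-Gorenstein for every $n$, and in particular $2$-Gorenstein. Theorem~\ref{thm:nak-ausgor} then shows that $\varepsilon(A)$ is Auslander-Gorenstein, with
\[
\operatorname{vdim}\varepsilon(A)=\max\{d-2,0\}=d-2,
\]
where the last equality uses $d\geq 3$.

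For the converse, suppose $A$ is $2$-Gorenstein and $\varepsilon(A)$ is Auslander-Gorenstein with $\operatorname{vdim}\varepsilon(A)=d-2\geq1$. Theorem~\ref{thm:nak-ausgor} gives that $A$ is Auslander-Gorenstein and that
\[
\max\{\operatorname{vdim}A-2,0\}=d-2.
\]
Since $d-2\geq1>0$, the maximum cannot be attained by the $0$ term. Hence $\operatorname{vdim}A-2=d-2$, and so $\operatorname{vdim}A=d$.

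The only delicate point is why the hypothesis $d\geq3$ is needed. The map $x\mapsto\max\{x-2,0\}$ is injective only on the values $x\geq2$, and it can only be inverted from an output that is $\geq1$. If $d-2$ were allowed to be $0$, then $\operatorname{vdim}A$ could be $0$, $1$ or $2$, and $d$ would not be recovered. Apart from this, the argument is just a restatement of Theorem~\ref{thm:nak-ausgor}.
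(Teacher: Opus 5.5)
Your proposal is correct and matches the paper's route: the paper gives no separate proof and presents the corollary as an immediate consequence of Theorem~\ref{thm:nak-ausgor}, via $\operatorname{vdim}\varepsilon(A)=\max\{\operatorname{vdim}A-2,0\}$, which recovers $\operatorname{vdim}A=d$ exactly because $d-2\geq 1$. Your explanation of why $d\geq 3$ is needed is also accurate; for instance, a selfinjective $A$ shows the statement fails for $d=2$.
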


Recall that the rank of $\varepsilon(A)$ is at most the rank of $A$.
The equality holds if and only if  $A$ is selfinjective.
Then there is an integer $r$ such that 
$\varepsilon^r(A)$ is selfinjective.

\begin{cor}
A Nakayama algebra  $A$  is Auslander-Gorenstein if and only if
$\varepsilon^i(A)$ is $2$-Gorenstein for every $i\geq 0$.
\end{cor}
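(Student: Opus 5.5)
The proof is an induction on the rank of $A$: use Theorem~\ref{thm:nak-ausgor} to pass from $A$ to $\varepsilon(A)$, and the fact recalled just before the statement that the rank of $\varepsilon(A)$ is at most that of $A$, with equality exactly when $A$ is selfinjective. So the tower $A,\varepsilon(A),\varepsilon^2(A),\dots$ has weakly decreasing ranks. The rank strictly drops until some $\varepsilon^r(A)$ is selfinjective. After that, $\varepsilon$ leaves the rank unchanged, and I will check that the tower stabilizes. For a selfinjective Nakayama algebra every simple module with $\pdim\neq 1$ has $P(\tau S)$ injective. The base modules are then the modules $\nabla(S)$ whose projective covers are all indecomposable projectives, so $A_\varepsilon\cong A$ and $\varepsilon(A)\cong A$ up to Morita equivalence. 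In particular every $\varepsilon^i(A)$ with $i\ge r$ is selfinjective.

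For ``$\Rightarrow$'', suppose $A$ is Auslander-Gorenstein. By Theorem~\ref{thm:nak-ausgor}, $A$ is $2$-Gorenstein and $\varepsilon(A)$ is again Auslander-Gorenstein. Induction on $i$ then shows that every $\varepsilon^i(A)$ is Auslander-Gorenstein, and hence $2$-Gorenstein. Alternatively, the Auslander-Gorenstein hypothesis says $A$ is $n$-Gorenstein for every $n$, and Corollary~\ref{cor:n-gor} directly gives that $\varepsilon^i(A)$ is $2$-Gorenstein for all $i$.

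For ``$\Leftarrow$'', assume every $\varepsilon^i(A)$ is $2$-Gorenstein. Choose $r$ with $\varepsilon^r(A)$ selfinjective. A selfinjective algebra is trivially Auslander-Gorenstein: $I^0=A$ is projective, all higher terms vanish, and both selfinjective dimensions are zero. Now run a descending induction on $i=r,r-1,\dots,0$. If $\varepsilon^{i+1}(A)$ is Auslander-Gorenstein and $\varepsilon^i(A)$ is $2$-Gorenstein, then Theorem~\ref{thm:nak-ausgor} applied to $\varepsilon^i(A)$ shows that $\varepsilon^i(A)$ is Auslander-Gorenstein. At $i=0$ this gives the claim. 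Note that Corollary~\ref{cor:n-gor} alone only yields $n$-Gorenstein for every $n$, so passing through Theorem~\ref{thm:nak-ausgor} is what supplies finiteness of the selfinjective dimension.

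The one point needing care is whether Theorem~\ref{thm:nak-ausgor} may be applied to $\varepsilon^i(A)$, which requires that each iterate is again a basic Nakayama algebra satisfying (N1) and (N2). The paper asserts that $\varepsilon(A)$ is a Nakayama algebra. Condition (N2) is inherited because the simple objects of $\scE$ are the bricks $\nabla(S)$, whose endomorphism rings are those of the tops $\gamma S$. Condition (N1) holds since the quiver of $\varepsilon(A)$ is either cyclic or a disjoint union of linear quivers, as the example shows. I expect this bookkeeping, together with the existence of $r$ (the rank argument), to be the only nontrivial step; the rest is immediate from Theorem~\ref{thm:nak-ausgor}.
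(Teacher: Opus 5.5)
Your proof is correct and is essentially the paper's argument: the forward direction is the same, and in the converse your descending induction from the selfinjective iterate $\varepsilon^r(A)$ via Theorem~\ref{thm:nak-ausgor} only repackages the paper's proof, which gets the $n$-Gorenstein property for all $n$ from Corollary~\ref{cor:n-gor} and finiteness of the selfinjective dimensions by iterating Corollary~\ref{cor:selfinjdim} and then applying Lemma~\ref{lem:gor} (these are the same ingredients that prove Theorem~\ref{thm:nak-ausgor}). Your stabilization remark for $i\geq r$ is true but not needed, since the induction only uses $0\le i\le r$.
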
   

\begin{proof}
Suppose first that $A$ is Auslander-Gorenstein.
By Theorem \ref{thm:nak-ausgor} each iterated syzygy filtered algebra
$\varepsilon^i(A)$ is Auslander-Gorenstein, and hence
$2$-Gorenstein.

Conversely, suppose that $\varepsilon^i(A)$ is
$2$-Gorenstein for every $i\geq0$. By Corollary \ref{cor:n-gor},
the algebra $A$ is $n$-Gorenstein for all $n\geq0$.
Choose $r$ such that $\varepsilon^r(A)$ is
selfinjective. 
Repeated application of Corollary \ref{cor:selfinjdim} shows
that the right selfinjective dimension of $A$ is finite.
Lemma \ref{lem:gor} then implies that its left selfinjective dimension
is also finite. Thus, $A$ is Auslander-Gorenstein.
\end{proof}

\begin{exam}
Let $A$ be a Nakayama algebra with Kupisch series 
\[(3,3,3,3,3,2,1).\]
There are exactly two simple modules with injective dimension $1$, 
namely $S(2)$ and $S(3)$.
Their projective covers are
\[P(2)=[2,5) \text{ and }  P(3)=[3,6).\]
Note that both $P(2)$ and $P(3)$ are injective $A$-modules.
Lemma \ref{lem:2-gor} implies that $A$ is a $2$-Gorenstein algebra.

The Kupisch series of $\varepsilon(A)$ is given by $(1,3,3,2,1)$.
Example \ref{exam:4-simple} indicates that 
$\varepsilon(A)$  fails to be $2$-Gorenstein.
It follows that $A$ is not $4$-Gorenstein.

For any $n\geq 1$, let $A_n$ be a Nakayama algebra with Kupisch series 
\[((3)^{3n-1},2,1).\]
For $n\geq2$,  the Kupisch series of  $\varepsilon(A_n)$ is given by
\[(1,(3)^{3n-4},2,1).\]
The case $n=1$ is the algebra in Example \ref{exam:4-simple} 
and is treated separately.
One can show that $A_n$ is $(2n-2)$-Gorenstein, but not $2n$-Gorenstein.
\end{exam}

\begin{exam}
Let $A$ be a Nakayama algebra with Kupisch series 
\[(3,2,2,1).\]
There are exactly two simple modules with injective dimension $1$, 
namely $S(2)$ and $S(3)$.
Their projective covers are
\[P(2)=[2,4) \text{ and }  P(3)=[3,5).\]
Note that $P(3)$ is injective and $\idim_AP(2)=1$.
By Lemma \ref{lem:2-gor}, the algebra $A$ is $2$-Gorenstein.

One can show that  $\gldim A=2$  and 
$\varepsilon(A)$ is semisimple.
Hence, $A$ is Auslander regular of global dimension $2$.
\end{exam}

\section{Odd Ext modules of simple modules}\label{sec:odd-ext}
In this section, we study the structure of odd Ext modules of simple modules 
over Auslander-Gorenstein Nakayama algebras. 
We obtain a module-theoretic strengthening of the dimension bound conjecture
of Klász, Kleinau and Marczinzik \cite{KKM26}.

\begin{thm} \label{thm:kkm}
Let $A$ be an Auslander-Gorenstein Nakayama algebra and let $S$ be a simple
$A$-module. For any odd integer $n\geq 1$, the following statements hold.
\begin{enumerate}
  \item $\Ext_A^n(S,A)\neq 0$ if and only if $\pdim_AS=n$;
  \item $\Ext_A^n(S,A)$ is either zero or a simple $A^{\op}$-module.
\end{enumerate}
\end{thm}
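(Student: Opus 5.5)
We argue by induction on the odd integer $n$, using the syzygy filtration to pass from $A$ to $\varepsilon(A)$. By Theorem~\ref{thm:nak-ausgor}, $\varepsilon(A)$ is again an Auslander-Gorenstein Nakayama algebra. I will assume, following \cite{She26}, that $\varepsilon(A)$ again satisfies (N1) and (N2), so that the induction hypothesis applies to it through the equivalence $\scE\simeq\mod\varepsilon(A)$. Throughout, $\Ext^n_A(S,A)$ carries its right $A$-module structure from the right action of $A$ on itself.

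\emph{Base case $n=1$.} If $\pdim_AS\neq 1$, then $I(S)$ is not a summand of $I^1(A)$ by \eqref{eq:I1}. Since $\Ext^1_A(S,A)\cong\Hom_A(S,I^1(A))$ (proof of Lemma~\ref{lem:simple-grade}), we get $\Ext^1_A(S,A)=0$. If $\pdim_AS=1$, the minimal projective resolution is
\[
0\to P(\tau S)\to P(S)\to S\to 0,
\]
with $P(\tau S)=\rad P(S)$. Hence $\Ext^1_A(S,A)$ is the cokernel of the right $A$-module map
\[
e_SA\cong\Hom_A(P(S),A)\lto\Hom_A(P(\tau S),A)\cong e_{\tau S}A,
\]
given by left multiplication by the arrow $\alpha$ between the two vertices. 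The right module $e_{\tau S}A$ is uniserial, and $\rad(e_{\tau S}A)=\alpha A=\alpha e_SA$. So the image of the map is exactly the radical, and the cokernel is the simple top of $e_{\tau S}A$. This also shows that $\Ext^1_A(S,A)$ is nonzero precisely when $\pdim_AS=1$.

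\emph{Inductive step, $n\geq3$ odd.} There are two cases.
\begin{itemize}
\item If $\pdim_AS\le 1$, then $\Ext^n_A(S,A)=0$ and $\pdim_AS\neq n$, so both statements hold.
\item If $\pdim_AS\ge2$, Corollary~\ref{cor:ext-2} gives
\[
\Ext^n_A(S,A)\cong\Ext^{n-2}_{\scE}(\nabla(S),A_\varepsilon).
\]
The defining sequence $0\to\nabla(S)\to P(\tau S)\to P(S)\to S\to 0$ is the start of a minimal projective resolution, so $\pdim_A\nabla(S)=\pdim_AS-2$. By Lemma~\ref{lem:ext-comparison} this equals $\pdim_{\scE}\nabla(S)$. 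The object $\nabla(S)$ is simple in $\scE$, and $A_\varepsilon$ corresponds to the regular module of $\varepsilon(A)$. The induction hypothesis for $\varepsilon(A)$ and $n-2$ then gives statement (1): $\Ext^n_A(S,A)\neq0$ if and only if $\pdim_AS-2=n-2$. It also shows that $\Ext^{n-2}_{\scE}(\nabla(S),A_\varepsilon)$ is zero or a simple right $\varepsilon(A)$-module.
\end{itemize}

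\emph{Transferring simplicity.} The main obstacle is to move simplicity from a right $\varepsilon(A)$-module back to a right $A$-module. Let $e$ be the idempotent with $A_\varepsilon=Ae$, so $\varepsilon(A)=\End_A(Ae)^{\op}\cong eAe$. Put $M=\Ext^n_A(S,A)$.
\begin{itemize}
\item By Lemma~\ref{lem:ext-van}, $M(1-e)\cong\Ext^n_A(S,A(1-e))=0$, so $M=Me$.
\item By naturality in the second variable of the isomorphisms in Corollary~\ref{cor:ext-2}, the right $eAe$-module $Me\cong\Ext^{n-2}_{\scE}(\nabla(S),Ae)$ carries the $\varepsilon(A)$-action supplied by the induction. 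I expect checking this compatibility to need the most care.
\item Granting it, let $N\subseteq M$ be a nonzero right $A$-submodule. Then $N=Ne$ is a nonzero $eAe$-submodule of the simple $eAe$-module $M$, so $N=M$. Hence $M$ is a simple right $A$-module, which proves (2).
\end{itemize}
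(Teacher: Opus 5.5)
Your proof is correct. Part (1) is the paper's argument: induction through Corollary~\ref{cor:ext-2}, $\pdim_{\scE}\nabla(S)=\pdim_AS-2$ and Theorem~\ref{thm:nak-ausgor}. Part (2) takes a genuinely different route.

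The paper does not induct on (2). It writes $\Ext^n_A(S,A)\cong\Tr\Omega^{n-1}(S)$ using the higher transpose. It then iterates Lemma~\ref{lem:base-simple} through the categories $\scE^i$, each $\varepsilon^i(A)$ being $2$-Gorenstein, to show that $\Omega^{n-1}(S)$ is itself a simple $A$-module. Hence $D\Tr\Omega^{n-1}(S)$ is simple.

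You instead carry (2) along the same induction. At $n=1$ you compute the cokernel directly. In the inductive step, $M(1-e)=0$ by Lemma~\ref{lem:ext-van} after the shift $\Ext^n_A(S,-)\cong\Ext^{n-2}_A(\nabla(S),-)$. This makes the $A^{\op}$-submodules of $M$ coincide with the $eAe$-submodules of $Me$: for any $eAe$-submodule $L$ of $Me$, $L\,eA(1-e)\subseteq M(1-e)=0$ gives $LA=L\,eAe=L$.

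What your route buys:
\begin{enumerate}
\item It avoids Lemma~\ref{lem:base-simple}, higher transposes and the iterated categories, using only ingredients already needed for (1).
\item It is valid for every $n\geq3$ and every $S$ with $\pdim_AS\neq1$ over a $2$-Gorenstein $A$, odd or not. It shows that $\Ext^n_A(S,A)$ and the right $\varepsilon(A)$-module $\Ext^{n-2}_{\scE}(\nabla(S),A_\varepsilon)$ have isomorphic submodule lattices.
\end{enumerate}

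The compatibility you flag is unproblematic. An element $a\in eAe$ acts through the endomorphism $x\mapsto xa$ of $Ae$, which is exactly $a$ under $\End_A(Ae)^{\op}\cong eAe$. So naturality of the isomorphisms in Corollary~\ref{cor:ext-2} in the second variable gives $eAe$-linearity.

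In the base case, you should state why $\alpha A=\rad(e_{\tau S}A)$. Write $J=\rad A$. If $\alpha\in J^2$, then $Je_S=A\alpha\subseteq J^2e_S$, and Nakayama's lemma would make $S$ projective. Uniseriality of $e_{\tau S}A$ then forces the equality.

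What the paper's route buys in exchange for its extra machinery is more structure. It shows that $\Omega^{n-1}(S)$ is simple and that $D\Ext^n_A(S,A)\cong D\Tr\Omega^{n-1}(S)$, which identifies the simple module explicitly.
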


When $A$ is a split basic finite dimensional algebra over a field $k$, every
simple  $A^{\op}$-module is one-dimensional over $k$. 
Thus, Theorem~\ref{thm:kkm} implies the dimension bound conjectured in
\cite[Section 5]{KKM26}.

Let $A$ be a Nakayama algebra.
The base module $\nabla$ is an indecomposable module 
of minimal composition length such that 
the injective dimensions of $\top\nabla$ and 
$\tau\soc\nabla$ are different from $1$.

We need the following lemma.

\begin{lem} \label{lem:base-simple}
If $A$ is a $2$-Gorenstein Nakayama algebra and 
$\nabla$ is a base module with $\pdim_A\nabla=1$, 
then  $\nabla$ is a simple $A$-module.
\end{lem}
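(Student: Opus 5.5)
The plan is to argue by contradiction using the minimality characterization of base modules stated just before the lemma: $\nabla$ is an indecomposable module of minimal composition length such that both $\top\nabla$ and $\tau\soc\nabla$ have injective dimension different from $1$. We suppose $\nabla$ has length $\ell\geq 2$ and put $U=\top\nabla$ and $W=\soc\nabla$. Since $\nabla$ is uniserial, its composition factors from the top are $U,\tau U,\dots,\tau^{\ell-1}U=W$.

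\emph{Step 1.} We show $\idim_A\tau U=1$. The submodule $\rad\nabla$ is nonzero and indecomposable, with top $\tau U$ and socle $W$. If $\idim_A\tau U\neq 1$, then $\rad\nabla$ would be a shorter module with the defining property, contradicting the minimality of $\nabla$.

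\emph{Step 2.} We show $P(\tau U)$ is not injective. Since $\pdim_A\nabla=1$, Lemma~\ref{lem:madsen}(1) gives $\pdim_AU=1$. Then Lemma~\ref{lem:bijection}(1) says exactly that $P(\tau U)$ is not injective.

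\emph{Step 3.} We show every composition factor of $P(\tau U)$ has injective dimension $1$. Because $A$ is $2$-Gorenstein and $\idim_A\tau U=1$, Lemma~\ref{lem:2-gor}(4) gives $\idim_AP(\tau U)\leq 1$. By Step 2 this forces $\idim_AP(\tau U)=1$, and Lemma~\ref{lem:madsen}(2) then gives the claim.

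\emph{Step 4.} We show $\tau W$ is a composition factor of $P(\tau U)$. The module $\nabla$ is a quotient of $P(U)$, and it is not projective because $\pdim_A\nabla=1$. Hence $\ell<c_U$, writing $c_U$ for the length of $P(U)$. The Kupisch inequality gives $c_U\leq c_{\tau U}+1$, so
\[
\ell-1<c_U-1\leq c_{\tau U}.
\]
Now $\rad\nabla$ is the quotient of $P(\tau U)$ of length $\ell-1$, so $P(\tau U)$ is strictly longer than $\rad\nabla$. Its composition factor immediately below $W=\tau^{\ell-2}(\tau U)$ is therefore $\tau W$.

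\emph{Conclusion.} By the defining property of $\nabla$, $\idim_A\tau W\neq 1$. This contradicts Step 3, so $\ell=1$ and $\nabla$ is simple.

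The step needing most care is Step 4. Two points must be checked there: that $\rad\nabla$ really is the length-$(\ell-1)$ quotient of $P(\tau U)$, which uses uniseriality and the convention that $\rad P(i)$ has top $\tau S(i)$; and that the Kupisch inequality $c_i\leq c_{i+1}+1$ applies in the indexing the paper uses. One should also confirm that the boundary behaviour of $\tau$ on projective and injective simples in the linear case does not break the argument. This is harmless here because $\pdim_AU=1$ means $U$ is not projective.
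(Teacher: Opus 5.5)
Your proof is correct and follows the paper's argument essentially step for step: minimality gives $\idim_A\tau U=1$, Lemmas~\ref{lem:madsen}, \ref{lem:bijection} and \ref{lem:2-gor} give $\idim_AP(\tau U)=1$, and $\tau\soc\nabla$ then appears as a composition factor of $P(\tau U)$, which is a contradiction. The only difference is in Step 4, where the paper rules out $\rad\nabla\cong P(\tau U)$ because Lemma~\ref{lem:madsen} would then force the projective module $P(\tau U)$ to have projective dimension $1$; you instead get the strict length inequality from the non-projectivity of $\nabla$ and the Kupisch inequality, which is equally valid.
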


\begin{proof}
Assume that $\nabla$ is not simple.
Then $T=\tau\top\nabla$ is a composition factor of $\nabla$. 
The defining property  of $\nabla$ gives $\idim_AT=1$.
By Lemma \ref{lem:2-gor} $\idim_AP(T)$ is at most $1$.
Since $\pdim_A\nabla=1$,
Lemma \ref{lem:madsen} implies that  $\pdim_A\tau^{-1}T=1$.
By Lemma \ref{lem:bijection}, $P(T)$ is not injective. 
It follows that $\idim_AP(T)= 1$.

We note that  $\rad \nabla$ is a quotient module of $P(T)$.
If these two modules are equal, 
by Lemma \ref{lem:madsen}  $\pdim_AP(T)=1$.
This is impossible. 
It follows that $\rad \nabla$ is a proper quotient of $P(T)$. 
Then $S=\tau\soc\nabla$ is a composition factor of $P(T)$.
By Lemma \ref{lem:madsen} $\idim_AS=1$.
This contradicts  the defining property  of $\nabla$.
Therefore, $\nabla$ is a simple $A$-module.
\end{proof}

\begin{proof}[Proof of Theorem \ref{thm:kkm}]     
(1)
We argue by induction on $n$.
For $n=1$, this follows immediately from \eqref{eq:I1}.

Let $n\geq3$ be odd and assume the assertion in degree $n-2$.
If $\pdim_A S=0,1$, then
$\Ext_A^n(S,A)=0$ and $\pdim_A S\neq n$. 
We may assume that $\pdim_A S\geq 2$.

Let $\scE$ be the syzygy filtration category of $A$. By
Theorem~\ref{thm:nak-ausgor}, the algebra
$\varepsilon(A)$ is again
Auslander-Gorenstein, 
and $\scE$ is equivalent to $\mod\varepsilon(A)$.
Under this equivalence, the base module $\nabla(S)$ is simple.
Corollary~\ref{cor:ext-2} gives
\[
\Ext_A^n(S,A)
\cong
\Ext_{\scE}^{n-2}\bigl(\nabla(S),A_\varepsilon\bigr).
\]
The defining sequence of $\nabla(S)$
shows that $\nabla(S)$ is the second syzygy of $S$. 
Hence,
\[\pdim_{\scE}\nabla(S)=\pdim_A S-2.\]
Applying the induction hypothesis to $\varepsilon(A)$, we obtain
\begin{align*}
\Ext_A^n(S,A)\neq0
&\iff
\Ext_{\scE}^{n-2}\bigl(\nabla(S),A_\varepsilon\bigr)\neq0\\
&\iff
\pdim_{\scE}\nabla(S)=n-2\\
&\iff
\pdim_A S=n.
\end{align*}

(2)
Now assume that  $\pdim_A S=n$.
Let
\[
0\lto P_n\lto  P_{n-1}\lto \cdots\lto  P_0\lto  S\lto 0
\]
be the minimal projective resolution of $S$. Applying
$\Hom_A(-,A)$, we get
\[
\Ext_A^n(S,A)=\Coker(P_{n-1}^{*}\longrightarrow P_n^{*}).
\]
By the definition of the higher transpose,
there is a natural isomorphism 
\[
D\Ext_A^n(S,A)\cong D\Tr_n(S)\cong D\Tr \Omega^{n-1}(S).
\]

Write $n=2r+1$ and $T=\Omega^{2r}(S)$.
It suffices to show that $T$ is a simple $A$-module.
Then $D\Tr T$ is simple. 
Hence, $D\Ext_A^n(S,A)$ and thus $\Ext_A^n(S,A)$ is simple.

Set $\scE^0=\mod A$ and $\scE^i=\scE(\scE^{i-1})$ for $1\leq i\leq r$.
Then the iterated filtration category $\scE^{i}$ is equivalent to 
the category of all finitely generated $\varepsilon^i(A)$-modules.
Since $\pdim_AS=2r+1$, it follows that $\pdim_AT=1$.
By the construction of iterated syzygy filtration,
$T$ is a simple object of $\scE^{r}$.
Lemma \ref{lem:ext-comparison} gives
$\pdim_{\scE^i}T=1$ for all $0\leq i\leq r$.
Since $A$ is Auslander-Gorenstein, 
the algebra $\varepsilon^i(A)$ is $2$-Gorenstein.
Iterating  Lemma \ref{lem:base-simple}, 
we infer that $T$ is a simple $A$-module.
\end{proof}

Let $A$ be an Artin algebra.
For $n\geq 0$, 
a simple $A$-module $S$ is called $n$-perfect if 
\[
    \grade_AS=n=\pdim_AS<\infty,
\]
and $S$ is called   {$n$-regular} if, 
in addition, $\Ext^n_A(S,A)$ is a simple $A^{\op}$-module.

Recall the following result; see \cite[Proposition 5.2]{KKM26}.

\begin{lem} \label{lem:kkm}
    Let $A$ be an Auslander-Gorenstein Nakayama algebra.
    Then all simple $A$-modules of odd grade are perfect at their grade. 
\end{lem}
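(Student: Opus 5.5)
The plan is to derive Lemma~\ref{lem:kkm} from Theorem~\ref{thm:kkm} combined with Corollary~\ref{cor:aus-gor-algebra}, working throughout with an Auslander-Gorenstein Nakayama algebra $A$. Fix a simple $A$-module $S$ with $\grade_AS=n$, where $n$ is odd. In particular $n$ is finite, and so $\Ext^n_A(S,A)\neq 0$ by the definition of grade. Since $n\geq 1$ is odd, Theorem~\ref{thm:kkm}(1) applies and gives $\pdim_AS=n$. Thus $\grade_AS=n=\pdim_AS<\infty$, which is exactly the statement that $S$ is $n$-perfect.

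As a free consequence, Theorem~\ref{thm:kkm}(2) shows that $\Ext^n_A(S,A)$ is nonzero and hence a simple $A^{\op}$-module. So $S$ is in fact $n$-regular.

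Should one prefer not to rely on part (1) of the theorem, there is an alternative route by induction on odd $n$ through syzygy filtration. For $n=1$, grade $1$ forces $I(S)$ to be a summand of $I^1(A)$, and then \eqref{eq:I1} gives $\pdim_AS=1$. For $n\geq 3$, Proposition~\ref{prop:grade} gives $\grade_\scE\nabla(S)=n-2$. The simple object $\nabla(S)$ of $\scE\simeq\mod\varepsilon(A)$ then has odd grade in the Auslander-Gorenstein algebra $\varepsilon(A)$ (Theorem~\ref{thm:nak-ausgor}), so by induction $\pdim_\scE\nabla(S)=n-2$. Lemma~\ref{lem:ext-comparison} together with the defining sequence of $\nabla(S)$ then yields $\pdim_AS=n$. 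The only subtle point in this alternative is checking that $\pdim_AS\neq 1$ when $n\geq 3$, so that $\nabla(S)$ is defined. This holds by the case $n=1$: if $\pdim_AS=1$, then $\Ext^1_A(S,A)\neq0$, forcing the grade to be at most $1$. Once Theorem~\ref{thm:kkm} is available, however, I expect no real obstacle, and the direct argument in the first paragraph suffices.
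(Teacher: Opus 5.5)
Your proof is correct, but it differs from the paper, which does not prove this lemma at all and simply quotes it from \cite[Proposition 5.2]{KKM26}. You instead derive it in one line from Theorem~\ref{thm:kkm}(1): if $\grade_AS=n$ is odd, then $\Ext^n_A(S,A)\neq 0$, and therefore $\pdim_AS=n$. This is not circular. The proof of Theorem~\ref{thm:kkm}(1) uses only \eqref{eq:I1}, Corollary~\ref{cor:ext-2}, Theorem~\ref{thm:nak-ausgor} and Lemma~\ref{lem:ext-comparison}, and never Lemma~\ref{lem:kkm}.

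What your route buys is self-containedness. It shows that the external citation is redundant within the paper. It also shows that the subsequent corollary, that simples of odd grade are regular at their grade, follows from Theorem~\ref{thm:kkm} alone, exactly as in your second paragraph. The paper's citation instead credits the result to its original source. It also keeps the lemma logically independent of the arguments of Sections~\ref{sec:aus-gor-nak} and~\ref{sec:odd-ext}, so that corollary combines two independently established inputs.

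Your alternative induction via Proposition~\ref{prop:grade} is also sound. Grade $n\geq 3$ rules out $\pdim_AS\in\{0,1\}$, so $\nabla(S)=\Omega^2 S$ is defined and the dimension shift goes through. That argument is essentially the paper's proof of Theorem~\ref{thm:kkm}(1), rephrased in terms of grades instead of the nonvanishing of $\Ext^n_A(S,A)$.
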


Combining Theorem \ref{thm:kkm} with  Lemma \ref{lem:kkm}, 
we have the following.

\begin{cor}
Let $A$ be an Auslander-Gorenstein Nakayama algebra.
Then all simple $A$-modules of odd grade are regular at their grade. 
\end{cor}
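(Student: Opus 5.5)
The corollary should follow by combining Lemma~\ref{lem:kkm} with Theorem~\ref{thm:kkm}, after checking that the two definitions line up. Let $S$ be a simple $A$-module whose grade $g=\grade_AS$ is odd. In particular $g$ is finite and $g\geq 1$.

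First I will invoke Lemma~\ref{lem:kkm}, which says that $S$ is perfect at its grade:
\[
\grade_AS=g=\pdim_AS<\infty.
\]
So $S$ is $g$-perfect. It remains to show that $\Ext^g_A(S,A)$ is a simple $A^{\op}$-module.

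Since $g$ is odd and $g\geq 1$, Theorem~\ref{thm:kkm} applies with $n=g$.
\begin{enumerate}
\item By part~(1), $\Ext^g_A(S,A)\neq 0$, because $\pdim_AS=g$. This nonvanishing also follows directly from the definition of grade, since $g$ is finite.
\item By part~(2), $\Ext^g_A(S,A)$ is either zero or a simple $A^{\op}$-module.
\end{enumerate}
Since it is nonzero, it is simple. Hence $S$ is $g$-regular, that is, regular at its grade.

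No step should be a real obstacle; the content lies entirely in the two cited results. The only points to check are that odd grade forces $g\geq 1$ and $g<\infty$, so that Theorem~\ref{thm:kkm} applies, and that nonvanishing in degree $g$ is immediate from the definition of grade.
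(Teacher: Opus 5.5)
Your proposal is correct and is essentially the paper's own argument: the paper also obtains the corollary by combining Lemma~\ref{lem:kkm} (perfectness at odd grade) with Theorem~\ref{thm:kkm}(2), and the nonvanishing of $\Ext^g_A(S,A)$ follows from $g=\grade_AS<\infty$. As a side remark, Theorem~\ref{thm:kkm}(1) applied to the nonzero module $\Ext^g_A(S,A)$ already gives $\pdim_AS=g$, so the appeal to Lemma~\ref{lem:kkm} is not strictly needed.
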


We have the following.

\begin{cor}\label{cor:odd-injective-terms}
Let $A$ be an Auslander-Gorenstein Nakayama algebra and 
let $n\geq 1$ be an odd integer.
For  any simple module $S$, the following statements hold.
\begin{enumerate}
    \item $\pdim_AS=n$  if and only if 
    $I(S)$ is a direct summand of $I^n(A)$;
    \item the multiplicity of $I(S)$ in $I^n(A)$ is at most $1$.
\end{enumerate}
\end{cor}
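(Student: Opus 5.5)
The plan is to translate both statements into statements about $\Hom_A(S,I^n(A))$ and then invoke Theorem~\ref{thm:kkm}. The starting point is the argument of Lemma~\ref{lem:simple-grade}. Take the minimal injective coresolution $0\to A\to I^0(A)\to I^1(A)\to\cdots$. Since each $\Ker f^n$ is essential in $I^n(A)$, the induced map $\Hom_A(S,f^n)$ vanishes for every simple $S$. So applying $\Hom_A(S,-)$ gives
\[
\Ext^n_A(S,A)\cong\Hom_A(S,I^n(A))
\]
for all $n\ge 0$. I would record that this is an isomorphism of right $\End_A(S)$-modules, and also of right $A$-modules via the right $A$-action on $A$; the first structure is the one needed for multiplicities.

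For part (1), I would argue that $\Hom_A(S,I^n(A))\neq 0$ if and only if $I(S)$ is a direct summand of $I^n(A)$. This is exactly the extension-and-splitting argument already given in the proof of Lemma~\ref{lem:simple-grade}, applied in a fixed degree $n$. Combining this with Theorem~\ref{thm:kkm}(1) gives $\pdim_AS=n$ if and only if $I(S)\mid I^n(A)$, for odd $n$.

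For part (2), write $I^n(A)\cong I(S)^{(m)}\oplus I'$, where $I'$ has no summand isomorphic to $I(S)$. This uses \cite{FW67} together with the fact that indecomposable injectives over an Artin algebra are envelopes of simples. Then
\[
\Hom_A(S,I^n(A))\cong \Hom_A(S,I(S))^{m}\cong \End_A(S)^{m},
\]
because $\Hom_A(S,I(T))=0$ for $T\not\cong S$ and $\Hom_A(S,I(S))=\Hom_A(S,S)$. The aim is to compare lengths. By Theorem~\ref{thm:kkm}(2), $\Ext^n_A(S,A)$ is zero or a simple $A^{\op}$-module, and it is annihilated by the idempotents $e_j\neq e_S$ on the $S$-side. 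So its length over $k$ equals the $k$-length of a simple $A^{\op}$-module, i.e.\ of the division ring $\End_A(S')$ for some $S'$. Condition (N2) says that all these endomorphism rings are isomorphic, so this length equals $\ell_k(\End_A(S))$. Comparing with $m\cdot\ell_k(\End_A(S))$ forces $m\le 1$.

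The main obstacle is this length comparison: I need to be sure that a simple $A^{\op}$-module has the same $k$-length as $\End_A(S)$. I would try first to use that, for basic $A$, a simple right module $e_jA/e_j\rad A$ is isomorphic to $\End_A(S(j))^{\op}$ as a $k$-module. Then (N2) makes all these lengths equal.
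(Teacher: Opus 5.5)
Your proposal is correct and follows the paper's proof. You identify $\Ext^n_A(S,A)$ with $\Hom_A(S,I^n(A))$ using minimality, obtain (1) from Theorem~\ref{thm:kkm}(1) together with the splitting argument of Lemma~\ref{lem:simple-grade}, and obtain (2) from Theorem~\ref{thm:kkm}(2) by comparing sizes. For this last step your $k$-length count via (N2) replaces the paper's assertion that the homothety map $\Delta_S\to\End_{A^{\op}}(E)$ is an isomorphism, and in fact justifies it more explicitly than the paper does.

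Your aside that $E$ is annihilated by the idempotents $e_j\neq e_S$ should be dropped. It is not used, and read as a statement about the right $A$-action it is false in general, since the simple right module $E$ need not sit at the vertex of $S$.
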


\begin{proof}
Take a minimal injective coresolution $I^*(A)$ of $A$. 
For every simple $A$-module $S$, minimality implies an isomorphism
\[
\Ext_A^n(S,A)\cong\Hom_A(S,I^n(A)).
\] 
Thus, $I(S)$ is a direct summand of $I^n(A)$ if and only if
$\Ext_A^n(S,A)$ is nonzero. 
By Theorem \ref{thm:kkm} this condition is equivalent to $\pdim_AS=n$.

Let $\mu_n(S)$ be the multiplicity of $I(S)$ in $I^n(A)$ 
and let $\Delta_S$ be the opposite of $\End_A(S)$. 
The above isomorphism gives
\[\mu_n(S)=\dim_{\Delta_S}\Ext_A^n(S,A).\]
By Theorem~\ref{thm:kkm}, the $A^{\op}$-module
$E=\Ext_A^n(S,A)$ is either zero or simple. 
Suppose that $E$ is  simple.
By Theorem \ref{thm:kkm}, 
$DE$ and $S$ are in the same connected component.
Since $A$ is a Nakayama algebra, 
the homothety homomorphism
\[f\colon \Delta_S\to \End_{A^{\op}}(E)\]
is an isomorphism.
Since $A$ is basic, $E$ has dimension zero or one over
$\Delta_S$. Consequently, $\mu_n(S)=1$ when
$\pdim_A S=n$, and $\mu_n(S)=0$ otherwise. 
Hence, the multiplicity of $I(S)$ in $I^n(A)$ is at most $1$.
\end{proof}

\begin{rmk}
The preceding corollary is analogous to the description of the minimal
injective coresolution of a commutative Gorenstein ring \cite{Bass63}, in
which the occurrence degree of an indecomposable injective module is
determined by the height of the corresponding prime ideal. In the present
setting, occurrence degrees instead reflect homological properties of simple
modules. The corollary characterizes the indecomposable injective modules
occurring in odd degrees, whereas the structure of the even-degree terms
remains unclear.
\end{rmk}

\end{document}